\definecolor{verylight}{gray}{0.97}
\definecolor{light}{gray}{0.9}
\definecolor{medium}{gray}{0.85}
\definecolor{dark}{gray}{0.6}
 \def\NZQ{\mathbb}               % the font for N,Z,Q,R,C
 \def\ZZ{{\NZQ Z}}
 \def\FF{{\NZQ F}}
 \def\GG{{\NZQ G}}
 \def\HH{{\NZQ H}}
 \def\frk{\mathfrak}               % font for "Fraktur"
 \def\mm{{\frk m}}
 \def\G{{\mathcal G}}
   \def\Fc{{\mathcal F}}
 \def\ab{{\mathbf a}}
 \def\bb{{\mathbf b}}
 \def\xb{{\mathbf x}}
 \def\cb{{\mathbf c}}
 \def\eb{{\mathbf e}}
 \def\0b{{\mathbf 0}}
 \def\opn#1#2{\def#1{\operatorname{#2}}} % to make operators
 \opn\chara{char} \opn\length{\ell} \opn\pd{pd} \opn\rk{rk}
 \opn\projdim{proj\,dim} \opn\injdim{inj\,dim} \opn\rank{rank}
 \opn\depth{depth} \opn\grade{grade} \opn\height{height}
 \opn\embdim{emb\,dim} \opn\codim{codim}
 \opn\Tr{Tr} \opn\bigrank{big\,rank}
 \opn\superheight{superheight}\opn\lcm{lcm}
 \opn\trdeg{tr\,deg}%\emph{
 \opn\reg{reg} \opn\lreg{lreg} \opn\ini{in} \opn\lpd{lpd}
 \opn\size{size} \opn\sdepth{sdepth}
 \opn\link{link}\opn\fdepth{fdepth}\opn\lex{lex}
 \opn\tr{tr}
 \opn\type{type}
 \opn\gap{gap}
 \opn\arithdeg{arith-deg}
 \opn\HS{HS}
 \opn\div{div} \opn\Div{Div} \opn\cl{cl} \opn\Cl{Cl}
 \opn\Spec{Spec} \opn\Supp{Supp} \opn\supp{supp} \opn\Sing{Sing}
 \opn\Ass{Ass} \opn\Min{Min}\opn\Mon{Mon}
 \opn\Ann{Ann} \opn\Rad{Rad} \opn\Soc{Soc}
 \opn\Im{Im} \opn\Ker{Ker} \opn\Coker{Coker} \opn\Am{Am}
 \opn\Hom{Hom} \opn\Tor{Tor} \opn\Ext{Ext} \opn\End{End}
 \opn\Aut{Aut} \opn\id{id}
 \opn\nat{nat}
 \opn\pff{pf}%   \pf exists already
 \opn\Pf{Pf} \opn\GL{GL} \opn\SL{SL} \opn\mod{mod} \opn\ord{ord}
 \opn\Gin{Gin} \opn\Hilb{Hilb}\opn\sort{sort}
 \opn\PF{PF}\opn\Ap{Ap}
 \opn\mult{mult}
 \opn\bight{bight}
 \opn\aff{aff}
 \opn\relint{relint} \opn\st{st}
 \opn\lk{lk} \opn\cn{cn} \opn\core{core} \opn\vol{vol}  \opn\inp{inp} \opn\nilpot{nilpot}
 \opn\link{link} \opn\star{star}\opn\lex{lex}\opn\set{set}
 \opn\width{wd}
 \opn\Fr{F}
 \opn\QF{QF}
 \opn\G{G}
 \opn\type{type}\opn\res{res}
 \opn\conv{conv}
 \opn\Ind{Ind}
 \opn\gr{gr}
 \def\pot#1#2{#1[\kern-0.28ex[#2]\kern-0.28ex]}
 \opn\dirlim{\underrightarrow{\lim}}
 \opn\inivlim{\underleftarrow{\lim}}
 \let\dirsum=\oplus
 \let\iso=\cong
 \let\Dirsum=\bigoplus
 \let\to=\rightarrow
 \def\Implies{\ifmmode\Longrightarrow \else
         \unskip${}\Longrightarrow{}$\ignorespaces\fi}
 \def\implies{\ifmmode\Rightarrow \else
         \unskip${}\Rightarrow{}$\ignorespaces\fi}
 \def\iff{\ifmmode\Longleftrightarrow \else
         \unskip${}\Longleftrightarrow{}$\ignorespaces\fi}
 \newtheorem{Theorem}{Theorem}[section]
 \newtheorem{Lemma}[Theorem]{Lemma}
 \newtheorem{Corollary}[Theorem]{Corollary}
 \newtheorem{Proposition}[Theorem]{Proposition}
 \newtheorem{Remark}[Theorem]{Remark}
 \newtheorem{Example}[Theorem]{Example}
 \newtheorem{Definition}[Theorem]{Definition}
 \newtheorem{Questions}[Theorem]{Questions}
 \let\epsilon\varepsilon
 \let\kappa=\varkappa
 \def\qed{\ifhmode\textqed\fi
       \ifmmode\ifinner\quad\qedsymbol\else\dispqed\fi\fi}
 \def\textqed{\unskip\nobreak\penalty50
        \hskip2em\hbox{}\nobreak\hfil\qedsymbol
        \parfillskip=0pt \finalhyphendemerits=0}
 \def\dispqed{\rlap{\qquad\qedsymbol}}
 \opn\dis{dis}
 \def\pnt{{\raise0.5mm\hbox{\large\bf.}}}
 \opn\Lex{Lex}
\begin{document}

\title {Homological shift ideals}

\author {J\"urgen Herzog,  Somayeh Moradi,  Masoomeh Rahimbeigi  and Guangjun Zhu}

\address{J\"urgen Herzog, Fachbereich Mathematik, Universit\"at Duisburg-Essen, Campus Essen, 45117
Essen, Germany} \email{juergen.herzog@uni-essen.de}

\address{Somayeh Moradi, Department of Mathematics, School of Science, Ilam University,
P.O.Box 69315-516, Ilam, Iran}
\email{so.moradi@ilam.ac.ir}

\address{Masoomeh Rahimbeigi, Department of Mathematics, University of Kurdistan, Post
Code 66177-15175, Sanandaj, Iran}
\email{rahimbeigi$_{-}$masoome@yahoo.com}

\address{Guangjun Zhu, School of Mathematical Sciences, Soochow University, Suzhou
215006, P. R. China}
\email{zhuguangjun@suda.edu.cn}

\begin{abstract}
For a monomial ideal $I$, we consider the $i$th homological shift ideal of $I$,  denoted by $\HS_i(I)$,  that is,  the ideal generated by the $i$th multigraded shifts of $I$. Some algebraic properties of this ideal are studied. It is shown that for any monomial ideal $I$ and any monomial prime ideal $P$,  $\HS_i(I(P))\subseteq \HS_i(I)(P)$ for all $i$, where $I(P)$ is the monomial localization of $I$.
In particular, we consider the homological shift ideal of some families of monomial ideals with linear quotients. For any $\cb$-bounded principal Borel ideal $I$ and for the edge ideal of complement of any path graph, it is proved that $\HS_i(I)$ has linear quotients for all $i$.
As an example of $\cb$-bounded principal Borel ideals, Veronese type ideals are considered and it is shown that the homological shift ideal of  these ideals are polymatroidal. This implies that for any polymatroidal ideal which satisfies  the strong exchange property, $\HS_j(I)$ is again a  polymatroidal ideal for all $j$.
 Moreover, for any edge ideal with  linear resolution, the ideal $\HS_j(I)$ is characterized and it is shown that $\HS_1(I)$ has linear quotients.
\end{abstract}

\keywords{$\cb$-bounded Borel ideals, monomial ideals, multigraded shifts, polymatroidal ideals.}

\subjclass[2010]{Primary 13F20; Secondary 13H10}

\maketitle

\setcounter{tocdepth}{1}

\section*{Introduction}

Let $K$ be a field,  $I$  a monomial ideal in  the polynomial ring $S=K[x_1,\ldots,x_n]$ and let $\FF$ be its minimal  multigraded free $S$-resolution. Then $F_j=\Dirsum_{k=1}^{b_j} S(-{\ab_{jk}})$ with   each $\ab_{jk}$  an integer vector in $\mathbb{Z}^n$ with non-negative entries. A monomial $x_1^{a_1}\cdots x_n^{a_n}$ will be denoted by $\xb^{\ab}$ where $\ab=(a_1,\ldots,a_n)$. With this notation introduced we define the $j$th {\em homological shift ideal} of $I$ to be the monomial ideal $\HS_j(I)$ generated by the monomials $\xb^{\ab_{jk}}$, $k=1,\ldots,b_j$.

When Somayeh Bandari  and Shamila Bayati met the first author in Essen in 2012,  the question came up whether the homological shift ideals of a polymatroidal ideal are again polymatroidal, as many examples indicated.  This question is still open. However Bayati \cite{Ba} gave a positive answer to this question when the polymatroidal ideal is actually matroidal. In Section~3 of this paper we give an affirmative answer to this question in another special case, namely for polymatroidal ideals which satisfy the strong exchange property.

In this paper we ask  ourselves more generally which properties of $I$ are shared by the ideals $\HS_j(I)$. So far all known cases and examples indicate that $\HS_j(I)$ has linear resolution or linear quotients if $I$ has the corresponding property.

In Section~1 we discuss general properties of homological shift ideals. It should be mentioned that $\HS_j(I)$ does not always give us the full information about the multigraded shifts of $F_j$. Indeed, some multigraded shift may appear with  higher multiplicity, which is not reflected by $\HS_j(I)$, or other  shifts do not appear in the minimal monomial set of generators of $\HS_j(I)$.  The latter  does not happen if $I$ has linear resolution.

In the case that $I$ is generated in a single degree and has linear relations, the generators of $\HS_1(I)$ can be quite explicitly described, as explained in Proposition~\ref{HS1}. If we require that $I$ has linear quotients, then it can be shown that $\HS_{j+1}(I)\subseteq \HS_1(\HS_j(I))$, see Proposition~\ref{easy}. In general however equality does not hold. Indeed, if $p=\projdim(I)$ and $\HS_p(I)$ is not a principal ideal, then $\HS_1(\HS_p(I))\neq 0$, while $\HS_{p+1}(I)=0$. Even worse, if $I$ does not have linear  quotients then   not even the inclusion   $\HS_{j+1}(I)\subseteq \HS_1(\HS_j(I))$ may be valid.  It would be of interest to have precise conditions for when $\HS_{j+1}(I)= \HS_1(\HS_j(I))$ for all $j$.

It is quite easy to see (Proposition~\ref{contained})  that $\HS_{j+1}(I)\subseteq \mm \HS_{j}(I)$, and consequently,  $\height(\HS_{i+1}(I))\leq \height(\HS_{i}(I))$ for all $j$, as noted in Corollary~\ref{containedh}. Due to this fact one would expect  that $\projdim \HS_{j+1}(I)\leq  \projdim \HS_{j}(I)$. So far we know this only in a special case which will be described below.

Given a monomial ideal $I$ and an integer vector in $\cb\in\mathbb{Z}^n$ with non-negative entries, one defines the ideal  $I^{\leq \cb}$ which is obtained from $I$ by taking only those monomials among the generators of $I$ whose exponents are componentwise less than or equal to $\cb$. It is shown in Corollary~\ref{hsjrestriction} that $\HS_j(I^{\leq \cb})=\HS_j(I)^{\leq \cb}$. Another important operation on monomial ideals is monomial localization. Here we show that $\HS_i(I(P))\subseteq \HS_i(I)(P)$ for all $i$ and all monomial prime ideals $P$. For the proof we construct,  starting with the resolution $\FF$ of $I$,  a multigraded (in general non-minimal) multigraded free resolution for $I(P)$, see Lemma~\ref{local}. This construction is of interest in its own. Ordinary localization of $\FF$ would destroy its multigraded structure. In Proposition~\ref{last} we observe that $\HS_{n-1}(I)$ can be expressed in terms of the socle of $I$ and in Proposition~\ref{pol} it is shown that $\HS_j(I^\wp)=\HS_j(I)^\wp$, where for a monomial ideal $J$, the ideal $J^\wp$ denotes its polarization.

In Section~2 we study the homological shift ideals of $\cb$-bounded strongly stable ideals.
There we generalize a result obtained by Bayati et al \cite{BJT} who showed that the homological shift ideals of a principal strongly stable ideal have linear quotients. We extend this result to $\cb$-bounded strongly stable ideals, see Theorem~\ref{mainHSj}. For the proof we show in Lemma~\ref{principalbound} that  $\cb$-bounded principal strongly stable ideals are obtained from principal strongly stable ideals by simply bounding the exponents of its generators by $\cb$. This is not as obvious as it appears at the first glance. Having this and a characterization of linear quotient ideals, as described in Lemma~\ref{criterion}, the desired result can be deduced from \cite[Theorem 2.4]{BJT}.

In Section 3 we consider the homological shift ideals of ideals of Veronese type, denoted $I_{\cb,n,d}$. These are special classes of polymatroidal ideals, namely those consisting of all monomials of degree $d$ in $n$ variables whose exponents are bounded by $\cb$. Any polymatroidal ideal satisfying the strong exchange property is of this type, up to multiplication by a monomial. It is shown in Proposition \ref{veroneseborel} that  any Veronese type ideal is indeed a $\cb$-bounded principal strongly stable ideal and then it has linear quotients.  Moreover, it is proved that the homological  shift ideals of $I_{\cb,n,d}$ are all polymatroidal, see Theorem~\ref{supp}.

Finally, in Section~4  the homological shift ideals of edge ideals with linear resolution are studied. Let $G$ be a finite  simple graph. By Fr\"oberg \cite{Fr}, the edge ideal $I(G)$ of $G$ has linear resolution if and only if the complementary graph $G^c$ of $G$ is chordal. By a theorem of Dirac \cite{Dirac}, a chordal graph has a perfect elimination ordering.  In terms of this elimination ordering, Theorem~\ref{chordal} provides for all $j$   an explicit description of the generators of $\HS_j(I(G^c))$, when $G$ is chordal. In the specific case of a path graph $P_n$, we show in Proposition~\ref{pathlinear} that $\HS_j(I(P_n^c))$ has linear quotients for all  $j\geq 0$. In Corollary~\ref{pathk} it is shown that $\HS_j(I(P_n^c))\neq (0)$ if and only if $j\leq n-3$. This is the consequence of a more general result on proper interval graphs, described in Lemma~\ref{interval}. Proposition~\ref{projdim} provides a formula for the projective dimension of $\HS_j(I(P_n^c))$. Indeed, we show that $\HS_j(I(P_n^c))=n-j-2$ for $j\leq n-3$. This supports our expectation that quite generally one has $\projdim\HS_{j+1}(I)\leq \projdim\HS_{j}(I)$ for all $j$. Even though, we know the generators of  $\HS_j(I(G^c))$ when $G$ is chordal, at present we are not able to prove that these ideals all have linear quotients. However, we show in  Theorem~\ref{linear quotients} that $\HS_1(I(G^c))$ is a vertex splittable ideal and hence has linear quotients.

\section{General  properties of   homological shift ideals}\label{sec1}

Let $K$ be a field and  $S=K[x_1,\ldots,x_n]$  be the polynomial ring in $n$ indeterminates over $K$.
\begin{Definition}
{\em Let
 $I\subset S$ be a monomial ideal with minimal multigraded free $S$-resolution
\[
\FF: 0\longrightarrow F_q\stackrel{\partial{q}}\longrightarrow F_{q-1}\longrightarrow\cdots \longrightarrow F_1\stackrel{\partial{1}}\longrightarrow F_0\longrightarrow I\longrightarrow 0,
\]
where $F_i=\Dirsum_{j=1}^{b_i}S(-\ab_{ij})$. The vectors $\ab_{ij}$ are called the {\em multigraded shifts} of the resolution  $\FF$.
The monomial ideal $\HS_i(I)=(\xb^{\ab_{ij}}\: j=1,\ldots,b_i)$ is called the {\em $i$th homological shift ideal} of $I$.}
\end{Definition}

Note that $\HS_0(I)=I$.

\begin{Example}
\label{exm1}
{\em
(a) Let $I=(x_1^3,x_1^2x_2,x_1x_2^2)$. Then $\HS_1(I)=(x_1^3x_2,x_1^2x_2^2)$ and $\HS_2(I)=0$.

(b) Let $I$ be a complete intersection with $\mathcal{G}(I)=\{u_1,\ldots,u_m\}$, where   $\mathcal{G}(I)$ is the unique minimal set of monomial generators of $I$. Since the Koszul complex with respect to the sequence  $u_1,\ldots,u_m$ is the minimal multigraded free resolution of $I$, we obtain
\[
\HS_j(I)=(u_{i_1}u_{i_2}\cdots u_{i_j}\:\; 1\leq i_1<i_2<\cdots<i_j\leq m).
\]

(c) Let $I$ be a Gorenstein monomial ideal. Let $p$ be the projective dimension of $S/I$ and  $\FF$ its  graded minimal free resolution. Then $F_p=S(-\ab)$ and $\FF$ is self-dual. This follows \cite[Corollary 3.3.9 and Theorem 3.3.7]{BH}. As a consequence, if $F_j=\Dirsum_{k=1}^{b_j}S(-\ab_{jk})$, then
\[
\{\ab_{p-j,1},\ldots, \ab_{p-j,b_{p-j}}\}=\{\ab-\ab_{j,1},\ldots, \ab-\ab_{j,b_j}\}.
\]
Thus for all $j$,
$$\mathcal{G}(\HS_{p-j-1}(I))\subseteq \{v/u\:\, u\in \mathcal{G}(\HS_j(I))\},$$ where $v=\xb^{\ab}$. Equality holds, when $\mathcal{G}(\HS_j(I))=\{\xb^{\ab_{j1}},\ldots,  \xb^{\ab_{jb_j}}\}$.

(d) Let $I$ be a stable monomial ideal. By using the Eliahou-Kervaire resolution \cite{EK} of $I$ , we have that
\[
\HS_j(I)=(x_Fu\:\;  u\in \mathcal{G}(I),\; F\subset [n], \max(F)< m(u), \text{ and $|F|=j$}) .
\]
Here, $x_F=\prod\limits_{i\in F} x_i$ $\max(F) =\max\{i\:\; i\in F\}$ and $m(u)=\max\{i\:\; \text{ $x_i$ divides $u$}\}$.
}
\end{Example}

The following result describes $\HS_1(I)$ when $I$ is linearly related.

\begin{Proposition}
\label{HS1}
Let $I$ be generated in a single degree, and suppose that $I$ has linear relations.  Then $\HS_1(I)$ is generated by all monomials of the form $x_iu$ with $u\in \mathcal{G}(I)$ for which there exist $j\neq i$ and $v\in \mathcal{G}(I)$ such that $x_iu =x_jv$.
\end{Proposition}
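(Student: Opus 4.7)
The plan is to identify the multigraded shifts at homological degree one directly from the first syzygy module $Z_1 = \Ker(F_0 \to I)$, where $F_0 = \bigoplus_{u \in \mathcal{G}(I)} S e_u$. Since $I$ is generated in a single degree $d$, the map $F_0 \to I$ is an isomorphism in degree $d$, so $Z_1$ is concentrated in degrees $\geq d+1$. The hypothesis that $I$ has linear relations says that $Z_1$ is generated in degree $d+1$; equivalently, each multidegree $\ab_{1k}$ has total degree $d+1$, and the vectors $\ab_{1k}$ are precisely the multidegrees of a minimal generating set of $Z_1$.

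Next I would compute, for each monomial $m$ of total degree $d+1$, the $K$-dimension of $(Z_1)_m$. A homogeneous element of $F_0$ of multidegree $m$ is a $K$-linear combination $\sum c_{i,u}\, x_i e_u$ indexed by the pairs $(i,u)$ with $u \in \mathcal{G}(I)$ and $x_i u = m$. Let $s_m$ denote the number of such pairs. Sending this element to $I$ yields $\bigl(\sum c_{i,u}\bigr)\, m$, so it lies in $Z_1$ if and only if $\sum c_{i,u} = 0$. Hence $\dim_K (Z_1)_m = \max(s_m - 1,\, 0)$.

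Because $Z_1$ vanishes in degree $d$, the submodule $\mm Z_1$ vanishes in degree $d+1$, and therefore the number of minimal generators of $Z_1$ in multidegree $m$ equals $\dim_K (Z_1)_m$. Thus $m$ appears as some $\xb^{\ab_{1k}}$ if and only if $s_m \geq 2$, which is the same as saying $m$ admits two distinct representations $m = x_i u = x_j v$ with $u,v \in \mathcal{G}(I)$ and $i \neq j$; this is precisely the description of the generators of $\HS_1(I)$ in the statement. The only point requiring care is the claim that these linear syzygies are automatically minimal generators of $Z_1$, rather than multiples of generators of strictly lower multidegree. This is forced by the vanishing of $Z_1$ in degree $d$, and is really where the single-degree hypothesis on $I$ enters the argument.
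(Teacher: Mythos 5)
Your proof is correct and, in fact, a bit more careful than the paper's on the key point of minimality. The paper's argument also works with the kernel $\Ker(\varphi)$ of $F_0 \to I$: for the forward inclusion it exhibits the explicit binomial syzygy $x_i e_k - x_j e_l$ and simply asserts that therefore $x_i u_k \in \HS_1(I)$, and for the reverse inclusion it takes a minimal generator $r$ of the kernel of multidegree $\ab$, notes by the linear-relations hypothesis that $|\ab| = d+1$, observes $r$ must have at least two nonzero coefficients (which already gives the desired factorization $\xb^\ab = x_{i_1} u_{k_1} = x_{i_2} u_{k_2}$ with $i_1 \ne i_2$), and then runs an induction on the number of terms to show in addition that the kernel is generated by binomial relations. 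Your route bypasses the explicit syzygies and the induction entirely by computing $\dim_K (Z_1)_m = \max(s_m - 1, 0)$ for each monomial $m$ of degree $d+1$ and then invoking $(\mm Z_1)_{d+1} = 0$ to identify this with the multigraded Betti number $\beta_{1,m}$. This is cleaner on the subtle step the paper glosses over — namely, that a degree-$(d+1)$ syzygy is automatically a minimal generator — and it yields slightly more (the exact multiplicity $s_m - 1$, not just which $m$ occur), whereas the paper's induction buys the structural fact that the binomial relations generate the first syzygy module. Both are valid; yours trades constructive detail for a cleaner linear-algebra count.
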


\begin{proof}
Let $\mathcal{G}(I)=\{u_1,\ldots,u_m\}$, $F$ be the free $S$-module with basis $e_1,\ldots,e_m$, and let $\varphi\: F\to I$ be the $S$-module homomorphism  with $\varphi(e_i)=u_i$ for $i=1,\ldots,m$. Then the multi-degree of $e_i$ is the same as that of $u_i$. It is clear $x_ie_k -x_je_l\in \Ker(\varphi)$ if $x_iu_k =x_ju_l$. Therefore, $x_iu_k\in \HS_1(I)$.

Conversely, let $r\in \Ker(\varphi)$ be a non-zero minimal generator of multi-degree $\ab$. By our assumption,  $\xb^{\ab}$ is of degree $d+1$ if $I$ is generated in degree $d$. Then $r$ is of the form $\sum_{k=1}^t\lambda_k (\xb^\ab/u_{i_k})e_{i_k}$ where the coefficients $\lambda_k$ belong to $K$, and where the  sum is  taken over all $u_{i_k}$ which divide $\xb^\ab$.

Since $r\neq 0$, the sum of $r$ has at least two non-zero terms, say, $\lambda_1,\lambda_2\neq 0$. Let $s=(\xb^{\ab}/u_{i_1})e_{i_1}- (\xb^{\ab}/u_{i_2})e_{i_2}$. Then $s\in \Ker(\varphi)$ is a  relation as described in the statement of the proposition and $r-\lambda_1s\in  \Ker(\varphi)$. If $r-\lambda_1 s=0$, we are done. Otherwise, $r-\lambda_1 s$ has at most  $t-1$ summands, and we may apply induction to deduce the desired conclusion.
\end{proof}

Let $I$ be a monomial ideal generated in degree $d$.  The ideal $I$ is said to have {\em linear quotients} if there exists some order $u_1,\ldots,  u_m$ of the elements of $\mathcal{G}(I)$ such that each  colon ideal $(u_1,\ldots,u_{j-1}) : u_j$ is  generated by a subset of $\{x_1, \ldots,  x_n\}$ for all $j=2, \ldots, m$.

The situation described in Example~\ref{exm1}(c) holds more generally. Indeed, let $I$ be any monomial ideal with $\mathcal{G}(I)=\{u_1,u_2,\ldots, u_m\}$. Suppose that $\deg u_1\leq \deg u_2\leq \cdots \leq \deg u_m$ and that $I$ has linear quotients with respect to this order of monomials. For  $u_j\in \mathcal{G}(I)$, let $(u_1,\ldots,u_{j-1}):u_j=\{x_{i_1},\ldots,x_{i_k}\}$. We denote by   $\set_I(u_j)$ or simply $\set(u_j)$  the set $\{i_1,\ldots,i_k\}$. It follows from \cite[Lemma 1.5]{HT} that
\begin{eqnarray}\label{eq1}
% \nonumber to remove numbering (before each equation)
 \HS_j(I) &=& (x_Fu\:\; u\in \mathcal{G}(I), |F|=j, F\subseteq \set(u)).
\end{eqnarray}

\medskip
This observation has the following consequence.

\begin{Proposition}
\label{easy}
Suppose that the monomial ideal $I$ has linear quotients. Then
\[
\HS_{j+1}(I)\subseteq \HS_1(\HS_j(I))
\]
for all $j$.
\end{Proposition}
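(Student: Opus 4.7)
The plan is to exploit formula~(\ref{eq1}) together with a standard multigraded syzygy argument. I would fix a generator $w = x_G u$ of $\HS_{j+1}(I)$, where $u \in \mathcal{G}(I)$, $G \subseteq \set(u)$, and $|G| = j+1$. When $j = 0$ the inclusion is trivial, so assume $j \geq 1$, so that $|G| \geq 2$. Pick two distinct indices $i, i' \in G$ and define
\[
v := x_{G \setminus \{i\}}\, u, \qquad v' := x_{G \setminus \{i'\}}\, u.
\]
By formula~(\ref{eq1}) both $v$ and $v'$ lie in $\HS_j(I)$, and since $\HS_j(I)$ is generated in the single degree $d + j$ (with $d$ the common degree of the generators of $I$), and $\deg v = \deg v' = d + j$, each of $v, v'$ is automatically a minimal generator of $\HS_j(I)$: any minimal generator that divides it has the same degree and hence equals it. Also $v \neq v'$ because $i \neq i'$.

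The key observation is that the identity $x_i v = x_{i'} v' = w$ gives rise to a non-zero multigraded element
\[
r := x_i\, e_v - x_{i'}\, e_{v'}
\]
in the kernel of the free presentation $\Dirsum_{\alpha \in \mathcal{G}(\HS_j(I))} S e_\alpha \twoheadrightarrow \HS_j(I)$; its multigrade is $w$, and it is non-zero since $e_v$ and $e_{v'}$ are distinct basis vectors.

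To conclude I would invoke the standard multigraded fact that any non-zero multigraded element of the first syzygy module of a monomial ideal is an $S$-linear combination of minimal syzygies whose multigrades divide its own. Applied to $r$, this forces some minimal first syzygy of $\HS_j(I)$ to have multigrade $w'$ with $w' \mid w$; by definition $w' \in \HS_1(\HS_j(I))$, so $w \in (w') \subseteq \HS_1(\HS_j(I))$. The main subtlety is the need to ensure that $v, v'$ are genuine minimal generators of $\HS_j(I)$, so that $r$ is truly non-zero in the free module, and this is exactly what the single-degree property of $\HS_j(I)$ secures.
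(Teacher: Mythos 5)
Your proof is correct and follows essentially the same route as the paper: both use formula~(\ref{eq1}) to write the generator of $\HS_{j+1}(I)$ as $x_G u$ with $G\subseteq\set(u)$, pick two distinct indices in $G$, and observe that removing either one yields generators $v,v'$ of $\HS_j(I)$ with $x_i v = x_{i'} v'$. The only difference is that the paper concludes membership in $\HS_1(\HS_j(I))$ tersely (it is the forward implication of Proposition~\ref{HS1}), whereas you spell out the justification via the non-zero multigraded syzygy $x_i e_v - x_{i'} e_{i'}$ and the divisibility of multidegrees of minimal syzygies; this is a correct and welcome elaboration of the same step.
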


\begin{proof}
The assertion is trivial for $j=0$. Now let us assume that $j>0$. We adopt the notation of the above discussion. Let $x_Fu\in \HS_{j+1}(I)$ with $u\in \mathcal{G}(I)$ and $F\subseteq \set(u)$. Since $j\geq 1$, there exist $a,b\in F$ with $a\neq b$. Since $F\setminus\{a\}\subset \set(u)$ and  $F\setminus\{b\}\subset \set(u)$, it follows that
$x_{F\setminus\{a\}}u, x_{F\setminus \{b\}}u\in \HS_j(I)$, and since $x_Fu= x_ax_{F\setminus \{a\}}u=x_bx_{F\setminus \{b\}}u$,  we deduce that $x_Fu\in \HS_1(\HS_j(I))$, as desired.
\end{proof}

In general, the inclusion  $\HS_{j+1}(I)\subseteq \HS_1(\HS_j(I))$ does  not hold. Indeed, let $I=(x_1x_2,x_2x_3,x_3x_4, x_4x_5,x_5x_1)$.  Then $$\HS_1(I)= (x_1x_2x_3, x_2x_3x_4, x_3x_4x_5, x_1x_4x_5,x_1x_2x_5)$$ and $$\HS_1(\HS_1(I))= (x_1x_2x_3x_4, x_2x_3x_4x_5, x_1x_3x_4x_5, x_1x_2x_4x_5),$$ while  $\HS_2(I)=(x_1x_2x_3x_4x_5)$.

\medskip
One may expect that $\HS_{j+1}(I)= \HS_1(\HS_j(I))$ when $I$ has linear quotients. However this is not the case. The simplest example of this kind is the ideal $I=(x_2x_4, x_1x_2,x_1x_3)$ which has linear quotients for the generators in the given order. Here we have $\HS_1(I)=(x_1x_2x_3, x_1x_2x_4)$, and $\HS_1(\HS_1(I))=(x_1x_2x_3x_4)$. However, $\HS_2(I)=(0)$.

\medskip
Let  $u\in S$ be a monomial, then $u= x_1^{a_1}\cdots x_n^{a_n}$,   and we write $u=\xb^{\ab}$ where $\ab=(a_1,\ldots,a_n)$. Let $\mm=(x_1,\ldots,x_n)$ be the graded maximal ideal of $S$.

\begin{Proposition}\label{contained}
 For all $j$ we have $\HS_{j+1}(I)\subseteq \mm \HS_{j}(I).$
\end{Proposition}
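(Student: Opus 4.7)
The plan is to argue directly from the differential in the minimal multigraded free resolution $\FF$ of $I$, using the two standard features of such a resolution: minimality forces the entries of every differential matrix to lie in $\mm$, and multigradedness forces each such nonzero entry to be a monomial.

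Concretely, I would proceed as follows. Fix $j$ and let $e_{j+1,k}$ denote the basis element of $F_{j+1}$ of multidegree $\ab_{j+1,k}$, for $k=1,\dots,b_{j+1}$. Writing the differential as
\[
\partial_{j+1}(e_{j+1,k}) \;=\; \sum_{l=1}^{b_j} c_{kl}\, e_{j,l},
\]
minimality of $\FF$ implies that each scalar entry $c_{kl}$ lies in $\mm$ or is zero (otherwise one could split off a trivial summand from $\FF$). Multigradedness implies that each nonzero $c_{kl}$ is a monomial of multidegree $\ab_{j+1,k}-\ab_{j,l}$, i.e.\ $c_{kl}=\xb^{\ab_{j+1,k}-\ab_{j,l}}$; in particular the exponent vector $\ab_{j+1,k}-\ab_{j,l}$ has nonnegative entries whenever $c_{kl}\neq 0$.

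Next I would use exactness to rule out that $\partial_{j+1}(e_{j+1,k})=0$. Indeed, if this were so, then $e_{j+1,k}$ would lie in $\ker \partial_{j+1} = \Im \partial_{j+2}$, forcing a basis element of $F_{j+2}$ to map to $e_{j+1,k}$ with a unit coefficient, again contradicting minimality of $\FF$. Hence some $c_{kl}$ is a nonzero monomial in $\mm$, and then
\[
\xb^{\ab_{j+1,k}} \;=\; c_{kl}\cdot \xb^{\ab_{j,l}} \;\in\; \mm\cdot \xb^{\ab_{j,l}} \;\subseteq\; \mm\, \HS_j(I).
\]
Since $\HS_{j+1}(I)$ is generated by the monomials $\xb^{\ab_{j+1,k}}$ as $k$ ranges over $1,\dots,b_{j+1}$, this yields $\HS_{j+1}(I)\subseteq \mm\,\HS_j(I)$.

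There is no real obstacle here; the only point that deserves a line of justification is the nonvanishing of $\partial_{j+1}(e_{j+1,k})$, which is the minimality/exactness argument in the previous paragraph. Everything else is the definition of $\HS_\bullet(I)$ combined with the multigraded form of the minimal resolution.
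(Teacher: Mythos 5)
Your proof is correct and follows essentially the same route as the paper: expand the differential $\partial_{j+1}$ on a basis element, use minimality to see the coefficients lie in $\mm$ and multigradedness to see they are monomials, and conclude that the multidegree monomial of the basis element factors through some $\xb^{\ab_{j,l}}\in\HS_j(I)$. The one place you go a bit further than the paper is in explicitly ruling out $\partial_{j+1}(e_{j+1,k})=0$ via exactness and minimality; the paper's proof tacitly assumes some $\lambda_\ell\neq 0$ exists, so your added line of justification is a small but genuine improvement in rigor (and covers the top-degree case $j+1=q$ as well, where $\Im\partial_{j+2}=0$ forces $\partial_q$ to be injective).
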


\begin{proof} Let
\[
\FF: 0\longrightarrow F_p\stackrel{\partial{p}}\longrightarrow F_{p-1}\longrightarrow\cdots \longrightarrow F_1\stackrel{\partial{1}}\longrightarrow F_0\longrightarrow I\longrightarrow 0
\]
be  the minimal multigraded free $S$-resolution of $I$,  where $F_j=\Dirsum_{k}S(-\ab_{jk})$.
Then the image of the differential $\partial_{j+1}: F_{j+1}\to F_j$ is contained in $\mm F_j$ for any $j$.
Let $\xb^{\ab}\in \HS_{j+1}(I)$,  then there exists a basis element $f\in F_{j+1}$ such that $\deg(f)=\ab$. Assume that the differential $\partial_{j+1}: F_{j+1}\to F_j$
  is given by
  $$
  \partial_{j+1}(f)=\sum\limits_{\ell\in A}\lambda_\ell\xb^{\cb_{\ell}}f_\ell
  $$
  where $\lambda_\ell\in K$ and $\{f_\ell: \ell\in A\}$ is a minimal multigraded basis of $F_j$. Then $\xb^{\cb_{\ell}}\in \mm$, and   it follows that
  $\xb^{\ab}=\xb^{\cb_{\ell}}\xb^{\deg(f_{\ell})}$ for such $\ell$ with $\lambda_\ell\neq 0$. Thus $\xb^{\ab}\in \mm\HS_j(I)$, since $\xb^{\deg(f_{\ell})}\in \HS_j(I)$.
\end{proof}

An  immediate consequence of the above proposition  is

\begin{Corollary}\label{containedh}
 For all $i$ we have $\height(\HS_{i+1}(I))\leq \height(\HS_{i}(I)).$
\end{Corollary}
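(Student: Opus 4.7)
The plan is to derive the height inequality as a direct two-step consequence of Proposition~\ref{contained}. First I would sharpen its conclusion by observing that $\mm\HS_i(I)\subseteq \HS_i(I)$ trivially, so from $\HS_{i+1}(I)\subseteq \mm\HS_i(I)$ one obtains the plain containment
\[
\HS_{i+1}(I)\subseteq \HS_i(I).
\]
Second, I would invoke the elementary monotonicity of height with respect to inclusion of ideals in a Noetherian ring: if $J\subseteq K$, then $\height(J)\leq \height(K)$. Applied to $J=\HS_{i+1}(I)$ and $K=\HS_i(I)$, this gives exactly the inequality claimed.

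For self-containedness I would include the one-line justification of the monotonicity fact. Let $P$ be a minimal prime of $K$ with $\height(P)=\height(K)$. Since $P\supseteq K\supseteq J$, the prime $P$ contains some minimal prime $Q$ of $J$, and therefore
\[
\height(J)\leq \height(Q)\leq \height(P)=\height(K),
\]
as desired.

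There is essentially no substantive obstacle here; the corollary is a one-line consequence of Proposition~\ref{contained}. The only point worth noting is that the direction of containment and the direction of the height inequality agree, which may momentarily look counterintuitive since the vanishing loci satisfy the reverse inclusion $V(K)\subseteq V(J)$; but codimension grows as the zero locus shrinks, so the inequality goes the way we want. Once this is observed, nothing further is required beyond citing the previous proposition.
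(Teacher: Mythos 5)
Your proof is correct and matches the paper's approach exactly: the paper presents this as an ``immediate consequence'' of Proposition~\ref{contained}, and the implicit argument is precisely the containment $\HS_{i+1}(I)\subseteq \mm\HS_i(I)\subseteq \HS_i(I)$ followed by monotonicity of height under inclusion of ideals, which you state and justify correctly.
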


\begin{Proposition}
\label{localization} Let $S_1=K[x_1,\ldots, x_m]$, $S_2=K[x_{m+1},\ldots, x_n]$ be two polynomial rings in disjoint sets of indeterminates and
$I$ and  $J$  be  monomial ideals of $S_1$ and $S_2$, respectively. Let $S=K[x_1,\ldots, x_n]$.
Then  for any $i$
$$\HS_i(IJ)=\sum\limits_{k+\ell=i}\HS_k(I)\HS_\ell(J).$$
\end{Proposition}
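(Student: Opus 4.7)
The plan is to build the minimal multigraded free $S$-resolution of $IJ$ explicitly as the $K$-tensor product of the minimal resolutions of $I$ over $S_1$ and $J$ over $S_2$. First I would observe that because $S_1$ and $S_2$ involve disjoint sets of variables we have $S = S_1 \otimes_K S_2$, and the natural multiplication map $\mu\colon I\otimes_K J \to IJ$ sending $u\otimes v$ to $uv$ is an isomorphism of multigraded $S$-modules: taking monomial $K$-bases on both sides, $\mu$ sends the basis element $u\otimes v$ to the monomial $uv$, and distinct pairs $(u,v)$ yield distinct monomials in $S$ by unique factorization, since the variables are disjoint.

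Let $\FF$ be the minimal multigraded free $S_1$-resolution of $I$ and $\G$ the corresponding resolution of $J$ over $S_2$, say with $F_k = \Dirsum_j S_1(-\ab_{kj})$ and $G_\ell = \Dirsum_{j'} S_2(-\bb_{\ell j'})$. I would form the tensor-product complex $\Hc = \FF \otimes_K \G$, whose $i$th term is
\[
\Hc_i \;=\; \Dirsum_{k+\ell=i} F_k \otimes_K G_\ell \;=\; \Dirsum_{k+\ell=i}\Dirsum_{j,j'} S\bigl(-(\ab_{kj}+\bb_{\ell j'})\bigr),
\]
which is a multigraded free $S$-module since $S_1 \otimes_K S_2 = S$. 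Because every term is flat over $K$, the Künneth formula gives $H_n(\Hc) = \Dirsum_{p+q=n} H_p(\FF) \otimes_K H_q(\G)$; this vanishes for $n \geq 1$ and equals $I\otimes_K J \cong IJ$ for $n=0$. Hence $\Hc$ is a multigraded free $S$-resolution of $IJ$.

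Next I would verify minimality: the differential on $\Hc$ is $\partial^\FF \otimes \id \,\pm\, \id \otimes \partial^\G$, and each summand lands in $\mm\Hc$ since $\partial^\FF$ has image in $\mm_{S_1}\FF$ and $\partial^\G$ in $\mm_{S_2}\G$ by minimality. Therefore $\Hc$ is the minimal multigraded free $S$-resolution of $IJ$. Reading off the multigraded shifts in homological degree $i$ then gives
\[
\HS_i(IJ) \;=\; (\xb^{\ab_{kj}+\bb_{\ell j'}} \colon k+\ell=i,\ \forall j,j') \;=\; \sum_{k+\ell=i} \HS_k(I)\,\HS_\ell(J),
\]
as required. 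The only delicate point is the injectivity of $\mu$, which rests entirely on the disjointness of the two variable sets; everything else is routine homological algebra applied to the Künneth double complex.
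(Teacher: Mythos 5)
Your proof is correct and follows essentially the same route as the paper: both construct the minimal multigraded free $S$-resolution of $IJ$ as the $K$-tensor product of the minimal resolutions of $I$ over $S_1$ and $J$ over $S_2$, and then read off the shifts $\ab_{kj}+\bb_{\ell j'}$. The paper simply asserts that this tensor complex is the minimal resolution of $IJ$; you fill in the routine verification (the isomorphism $I\otimes_K J\cong IJ$ from disjointness of variables, K\"unneth for acyclicity, and minimality of the tensor differential).
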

\begin{proof} Let
\[
\FF: 0\to F_p\to\cdots \to F_1\to F_0\to I\to 0
\]
and
\[
\GG: 0\to G_q\to\cdots \to G_1\to G_0\to J\to 0
\]
be  the minimal multigraded free $S_1$-resolutions of $I$ and $S_2$-resolutions of $J$,  respectively.
Let $H_i=\sum\limits_{k+\ell=i}F_k \otimes_K G_\ell$, then
\[
\HH: 0\to H_{p+q}\to\cdots \to H_1\to H_0\to IJ\to 0
\]
is the minimal multigraded free $S$-resolutions of $IJ$. The desired conclusion follows, because if
$F_k=\Dirsum_iS_1(-\ab_i)$ and $G_\ell=\Dirsum_jS_2(-\bb_j)$, then $F_k \otimes_K G_\ell=\Dirsum_{i,j}S(-\ab_i-\bb_j)$.
\end{proof}

 Let $\cb=(c_1,\ldots,c_n)$ be an integer vector with $c_i\geq 0$.  A  monomial $u=\xb^{\ab}$  is called $\cb$-bounded, if $\ab\leq \cb$, that is, $a_i\leq c_i$  for all $i$. Let $I$ be a monomial ideal generated by the monomials $u_1,\ldots, u_m$.  We set
\[
I^{\leq \cb}=(u_i\:\; \text{ $u_i$  is  $\cb$-bounded}).
\]

\begin{Remark}
\label{easy2}
{\em The  definition of $I^{\leq \cb}$  does not depend on the chosen  set of monomial generators $u_1,\ldots,u_m$ of $I$. Indeed, let $J$ be the ideal generated by the $\cb$-bounded monomials of $\mathcal{G}(I)$.
We may assume that $\mathcal{G}(J)=\{u_1,\ldots,u_r\}$ with $r\leq m$. We want to show that $J=I^{\leq \cb}$. Obviously $J\subseteq I^{\leq \cb}$. Now let $w\in I^{\leq\cb}$. Then there exists $1\leq i\leq m$ such that $u_i|w$. Since $w$ is $\cb$-bounded, then $u_i$ is also $\cb$-bounded. Thus $i\leq r$. This means that $u_i\in J$ and hence $w\in J$.}
\end{Remark}

We  recall the so-called restriction lemma.

\begin{Lemma}[\cite{HHZ}, Lemma 4.4]
\label{restriction}
Let $I\subset S$ be a monomial ideal, and  $\FF$ be its minimal multigraded free $S$-resolution. Furthermore, let  $\cb=(c_1,\ldots,c_n)$ be an integer vector with $c_i\geq 0$. Let $F_i=\Dirsum_{j}S(-\ab_{ij})$. Then $\FF^{\leq \cb}$ with $F_i^{\leq\cb}= \Dirsum_{ \ab_{ij}\leq \cb}S(-\ab_{ij})$ is a subcomplex of $\FF$ and a minimal multigraded free resolution of $I^{\leq \cb}$.
\end{Lemma}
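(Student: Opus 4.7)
My plan is to prove three things: (i) $\FF^{\leq\cb}$ is a subcomplex of $\FF$; (ii) it is exact in positive homological degree and has $H_0=I^{\leq \cb}$; (iii) it is minimal. Minimality will be automatic, since the entries of the differentials of $\FF^{\leq\cb}$ are obtained by restricting those of the minimal $\FF$ and therefore still lie in $\mm$. For (i) I would exploit the multigraded nature of $\partial$: if $e_{ij}$ is a basis vector of $F_i$ of multidegree $\ab_{ij}\leq \cb$, then $\partial(e_{ij})=\sum_\ell \lambda_\ell \xb^{\cb_\ell}e_{(i-1),\ell}$ with $\lambda_\ell\in K$, and the multigraded condition forces $\ab_{(i-1),\ell}+\cb_\ell=\ab_{ij}$ componentwise. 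Hence $\ab_{(i-1),\ell}\leq \ab_{ij}\leq \cb$, and each summand already lies in $F_{i-1}^{\leq \cb}$.

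The substantive step is (ii), and I would verify it one multigraded strand at a time. For a fixed $\ab\in\NN^n$ set $\ab'=\min(\ab,\cb)$ componentwise. A basis vector $\xb^{\ab-\ab_{ij}}e_{ij}$ contributes to $(F_i^{\leq\cb})_\ab$ precisely when $\ab_{ij}\leq \ab$ and $\ab_{ij}\leq \cb$, which is equivalent to $\ab_{ij}\leq \ab'$. The $K$-linear map that rescales $\xb^{\ab-\ab_{ij}}e_{ij}\mapsto \xb^{\ab'-\ab_{ij}}e_{ij}$, i.e.\ divides each generator by the common monomial $\xb^{\ab-\ab'}$, therefore yields an isomorphism of $K$-vector spaces $(F_i^{\leq\cb})_\ab\to (F_i)_{\ab'}$ in each homological degree $i$. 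Since the multigraded structure constants $\lambda_\ell$ are unaffected by this rescaling, this is in fact an isomorphism of complexes of $K$-vector spaces $(\FF^{\leq\cb})_\ab\iso (\FF)_{\ab'}$. Because $\FF$ is a resolution of $I$, the right-hand strand has no higher homology and $H_0$ equal to the $1$-dimensional space $I_{\ab'}$ if $\xb^{\ab'}\in I$ and $0$ otherwise.

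It remains to identify $H_0((\FF^{\leq\cb})_\ab)$ with $(I^{\leq\cb})_\ab$, i.e.\ to show that $\xb^{\ab'}\in I$ if and only if $\xb^\ab\in I^{\leq\cb}$. If $\xb^{\ab'}\in I$, some minimal generator $u=\xb^\vb$ with $\vb\leq\ab'\leq\cb$ divides $\xb^{\ab'}$, so $u$ is $\cb$-bounded and divides $\xb^{\ab}$, giving $\xb^\ab\in I^{\leq\cb}$ (this uses Remark~\ref{easy2}). Conversely, if $\xb^\ab\in I^{\leq\cb}$, then some $\cb$-bounded generator $\xb^\vb$ (so $\vb\leq \cb$) divides $\xb^\ab$, hence $\vb\leq \ab$; combining, $\vb\leq\ab'$, so $\xb^\vb\mid \xb^{\ab'}$ and $\xb^{\ab'}\in I$. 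Hence $(\FF^{\leq\cb})_\ab$ is exact in positive degrees with $H_0=(I^{\leq\cb})_\ab$ for every $\ab$, which gives the result.

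The only real obstacle I see is the verification that the rescaling map commutes with the differentials in each strand — one must check that a single differential $\partial$ on $\FF$, expanded on a multigraded basis, yields compatible differentials on $(\FF^{\leq\cb})_\ab$ and $(\FF)_{\ab'}$ once one formally factors out the monomial $\xb^{\ab-\ab'}$. This is essentially bookkeeping forced by the multigraded degree condition $\ab_{(i-1),\ell}+\cb_\ell=\ab_{ij}$, but it is the point where the whole argument hinges on $\FF$ being multigraded rather than merely $\ZZ$-graded.
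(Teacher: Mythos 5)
The paper does not prove this lemma; it is quoted from \cite{HHZ}, so there is no in-paper argument to compare against. Your proof is correct and is the natural argument: the central observation that rescaling by the common monomial $\xb^{\ab-\ab'}$, with $\ab'=\min(\ab,\cb)$, turns the $\ab$-strand of $\FF^{\leq\cb}$ into the $\ab'$-strand of $\FF$ (and that this rescaling commutes with the multigraded differential) is exactly the right mechanism, and your verifications of the subcomplex property and of minimality are also fine.

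The one step worth tightening is the passage from ``$\dim_K H_0((\FF^{\leq\cb})_\ab)=\dim_K(I^{\leq\cb})_\ab$'' to ``$H_0((\FF^{\leq\cb})_\ab)=(I^{\leq\cb})_\ab$.'' Equality of dimensions alone does not identify the two spaces; what makes the identification canonical is that the augmentation $F_0\to I$ restricts to a map $F_0^{\leq\cb}\to I^{\leq\cb}$ that is surjective (this is where Remark~\ref{easy2} is really used: the $\cb$-bounded minimal generators of $I$ generate $I^{\leq\cb}$), and this surjection kills $\im\partial_1^{\leq\cb}$, hence factors through $H_0$. Surjectivity together with your dimension count then forces it to be an isomorphism in every multidegree, which is what ``$\FF^{\leq\cb}$ resolves $I^{\leq\cb}$'' requires. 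Making that factorization explicit closes the only loose seam; the rest of the argument is complete as written.
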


The restriction lemma  together with Remark~\ref{easy2} implies

\begin{Corollary}
\label{hsjrestriction}
$\HS_j(I^{\leq \cb})=\HS_j(I)^{\leq \cb}$.
\end{Corollary}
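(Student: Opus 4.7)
The plan is to invoke the restriction lemma directly, and then use Remark~\ref{easy2} to identify the result with $\HS_j(I)^{\leq \cb}$. Let $\FF$ be the minimal multigraded free $S$-resolution of $I$ with $F_j=\Dirsum_{k} S(-\ab_{jk})$, so that by definition $\HS_j(I)=(\xb^{\ab_{jk}}: k=1,\dots,b_j)$.

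First, apply Lemma~\ref{restriction} to conclude that the subcomplex $\FF^{\leq \cb}$, whose $j$th module is $F_j^{\leq \cb}=\Dirsum_{\ab_{jk}\leq \cb} S(-\ab_{jk})$, is a minimal multigraded free resolution of $I^{\leq \cb}$. Reading off the $j$th homological shift ideal from this resolution gives
\[
\HS_j(I^{\leq \cb}) \;=\; \bigl(\,\xb^{\ab_{jk}} \;:\; \ab_{jk}\leq \cb\,\bigr),
\]
that is, $\HS_j(I^{\leq \cb})$ is generated by exactly those among the monomials $\xb^{\ab_{jk}}$ which are $\cb$-bounded.

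Now $\{\xb^{\ab_{jk}}:k=1,\dots,b_j\}$ is a set (possibly with repetitions and possibly non-minimal) of monomial generators of $\HS_j(I)$. By Remark~\ref{easy2}, the construction $(-)^{\leq \cb}$ does not depend on the chosen monomial generating set, so the ideal generated by the $\cb$-bounded monomials in $\{\xb^{\ab_{jk}}\}$ equals $\HS_j(I)^{\leq \cb}$. Combining with the display above yields $\HS_j(I^{\leq \cb})=\HS_j(I)^{\leq \cb}$.

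There is essentially no obstacle beyond checking that the argument does not get confused by the possibility that some $\xb^{\ab_{jk}}$ coincide or fail to be minimal generators of $\HS_j(I)$; this is exactly the point addressed by Remark~\ref{easy2}, which is why it was placed immediately before this corollary. Everything else is a direct transcription of the restriction lemma into the language of homological shift ideals.
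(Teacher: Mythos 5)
Your proof is correct and is exactly the argument the paper has in mind: the paper states the corollary as an immediate consequence of Lemma~\ref{restriction} and Remark~\ref{easy2}, which is precisely the two-step deduction you spell out.
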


Let $P$ be a monomial prime ideal. The monomial localization $I(P)$  of $I$ is obtained from $I$ by the substitution $x_i\mapsto 1$ for $x_i\not\in P$.   Note that $I(P)$ is a monomial ideal
in $S(P)$ and  $I(P)S_P=IS_P$, where $S(P)$ is the polynomial ring in the variables which generate $P$ and $S_P$ is the localization  of $S$ with respect to $P$.

\begin{Proposition}
\label{localization}
$\HS_i(I(P))\subseteq \HS_i(I)(P)$ for all $i$ and all monomial prime ideals $P$.
\end{Proposition}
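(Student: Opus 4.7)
The plan is to produce a (generally non-minimal) multigraded free resolution $\tilde\FF$ of $I(P)$ over $T=S(P)$ whose $i$th multigraded shifts are precisely the images $\pi(\ab_{ik})$ of the shifts of $\FF$ under the coordinate projection $\pi\:\NN^n\to\NN^n$ that kills the components corresponding to variables outside $P$. Granted such a resolution, the minimal multigraded resolution of $I(P)$ is obtained from $\tilde\FF$ by splitting off redundant trivial summands, so its multigraded shifts lie among $\{\pi(\ab_{ik})\}_k$, and every generator of $\HS_i(I(P))$ belongs to $\{\xb^{\pi(\ab_{ik})}\}_k$. Since applying the substitution $x_i\mapsto 1$ for $x_i\notin P$ to the generators $\xb^{\ab_{ik}}$ of $\HS_i(I)$ yields precisely $\xb^{\pi(\ab_{ik})}$, we have $\HS_i(I)(P)=(\xb^{\pi(\ab_{ik})}\:k)$, from which the desired inclusion is immediate.

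To build $\tilde\FF$ I exploit that, because $\FF$ is minimal multigraded and $I$ is monomial, each entry of every differential $\partial_i$ is either zero or a scalar multiple of the single monomial $\xb^{\ab_{ik}-\ab_{i-1,j}}$. I set $\tilde F_i=\Dirsum_k T(-\pi(\ab_{ik}))$ and define $\tilde\partial_i$ by replacing each entry $\lambda\,\xb^{\ab_{ik}-\ab_{i-1,j}}$ of $\partial_i$ by $\lambda\,\xb^{\pi(\ab_{ik})-\pi(\ab_{i-1,j})}$---equivalently, by applying the surjective ring homomorphism $S\to T$ that sends $x_i\mapsto 1$ for $x_i\notin P$ to each matrix entry. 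Additivity of $\pi$ makes $\tilde\FF$ a multigraded complex of free $T$-modules, and as an ungraded $T$-complex $\tilde\FF$ coincides with $\FF\otimes_S T$, where $T$ is viewed as the quotient $S/J$ with $J=(x_i-1\:x_i\notin P)$.

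The crux of the proof, and the main obstacle, is showing that $\tilde\FF$ is exact with $H_0=I(P)$. This reduces to verifying that $J$ is generated by a regular sequence on $S/I$. The key combinatorial input is: for any monomial ideal $L\subseteq S$ and any variable $x_k$, the element $x_k-1$ is a nonzero divisor on $S/L$. To see this, expand any element as $a=\sum_i a_i x_k^i$ with $a_i\in K[x_j\:j\neq k]$ and decompose $L=\sum_i L_i x_k^i$ with ascending ``slices'' $L_0\subseteq L_1\subseteq\cdots$; the hypothesis $(x_k-1)a\in L$ then forces $a_i\in L_i$ by induction on $i$. Iterating over the variables outside $P$ shows $J$ is regular on $S/I$, hence $\Tor_i^S(I,T)=0$ for $i\ge 1$, so $\tilde\FF=\FF\otimes_S T$ is acyclic in positive homological degrees. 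The same argument produces an injection $I\otimes_S T\hookrightarrow T$ whose image is the ideal generated by the images of the monomial generators of $I$, namely $I(P)$. Once this regular-sequence lemma is in place, the comparison of shifts is purely formal.
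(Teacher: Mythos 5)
Your construction of the non‑minimal resolution $\tilde\FF$ of $I(P)$ is exactly the complex $\FF(P)$ that the paper builds, and the concluding step (the multigraded shifts of the minimal resolution form a sub‑multiset of those of any multigraded resolution, hence the inclusion of homological shift ideals) is the same. The two proofs diverge only in how exactness of $\tilde\FF$ is established. The paper's Lemma~\ref{local} proceeds strand by strand, asserting an isomorphism of complexes of $K$‑vector spaces $\FF_\ab \cong \FF(P)_{\ab^*}$ and transferring exactness degree by degree; this requires care, because for a generic $\ab$ there can be basis elements $u_{ij}$ with $u_{ij}^* \mid \xb^{\ab^*}$ but $u_{ij}\nmid \xb^{\ab}$, so the claimed isomorphism really needs $\ab$ to be taken sufficiently large in the coordinates outside $P$ in order to cover a given degree of $\FF(P)$. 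Your argument replaces this pointwise bookkeeping with a single homological fact: you identify $\tilde\FF$ with $\FF\otimes_S T$, $T=S/(x_i-1 \: x_i\notin P)$, and reduce exactness to $\Tor^S_{\geq 1}(I,T)=0$, which you verify by proving that $(x_i-1 \: x_i\notin P)$ is a regular sequence on $S/I$. Your slicing lemma is correct as stated: for a monomial ideal $L$ the slices $L_0\subseteq L_1\subseteq\cdots$ along powers of $x_k$ are ascending, the coefficient identities from $(x_k-1)a\in L$ give $a_0\in L_0$ and $a_{i-1}-a_i\in L_i$, and induction forces each $a_i\in L_i$, so $a\in L$; and iterating is legitimate because the image of a monomial ideal under $x_k\mapsto 1$ is again a monomial ideal in the remaining variables. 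So this is a genuinely different, more structural route: it is less sensitive to the multigrading and, as a by‑product, gives the stronger statement that the monomial localization is $\Tor$‑independent of $I$, not just that some multigraded resolution of $I(P)$ with controlled shifts exists.
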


For the proof of   Proposition~\ref{localization}  we need monomial localization of multigraded free resolutions. Let $I\subset S$ be a monomial ideal, and
let
\[
\FF: 0\to F_q\to\cdots \to F_1\to F_0\to 0
\]
be a  multigraded free $S$-resolution of $I$. Let  $F_i=\Dirsum_jS(-\ab_{ij})$. Since $S(-\ab_{ij})$ is isomorphic to the principal ideal $(\xb^{\ab_{ij}})$, we may write $F_i=\Dirsum_j(\xb^{\ab_{ij}})$. For example if $I=(x_1^2,x_1x_2)$, then with this notation,
\begin{eqnarray}
\label{simpleexample}
0\to (x_1^2x_2)\to(x_1^2)\dirsum (x_1x_2)\to (x_1^2,x_1x_2)\to 0
\end{eqnarray}
is the minimal multigraded free $S$-resolution of $I$, and $\partial(x_1^2x_2)=x_2(x_1^2)-x_1(x_1x_2).$

We let  $F_i(P)=\Dirsum_j(\xb^{\ab_{ij}})(P)=\Dirsum_j(\xb^{\bb_{ij}})$, where $\xb^{\bb_{ij}}$ is obtained from $\xb^{\ab_{ij}}$   by the substitution $x_i\mapsto 1$ for $x_i\not\in P$.

In order to simplify notation, for any monomial $u\in S$ we let $u^*$ be the monomial which is obtained from $u$  by the substitution $x_i\mapsto 1$ for $x_i\not\in P$. We now define the complex
\[
\FF(P): 0\to F_q(P)\to\cdots \to F_1(P)\to F_0(P)\to 0,
\]
whose differential $\partial^*$ is given as follows: set $u_{ij}=\xb^{\ab_{ij}}$ for all $i$ and $j$.  Then
$\partial(u_{ij})=\sum_k\lambda_{jk}v_{jk}u_{i-1,k}$ with $\lambda_{jk}\in K$ and $v_{jk}$ monomials such that $u_{ij}=v_{jk}u_{i-1,k}$ for all $k$.

Then we set
\[
\partial^*(u_{ij}^*)=\sum_k\lambda_{jk}v_{jk}^*u_{i-1,k}^*.
\]

\begin{Lemma}
\label{local}
With the notation introduced,  $\FF(P)$ is a multigraded free $S$-resolution of $I(P)$.
\end{Lemma}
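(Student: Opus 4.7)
The plan is to proceed in two stages. First, I verify that $\FF(P)$ is a complex. Expanding $\partial^2(u_{ij})$ using $\partial(u_{ij})=\sum_k \lambda_{jk} v_{jk} u_{i-1,k}$ and $\partial(u_{i-1,k})=\sum_l \mu_{kl} w_{kl} u_{i-2,l}$, the coefficient of $u_{i-2,l}$ in $\partial^2(u_{ij})$ equals $\sum_k \lambda_{jk}\mu_{kl} v_{jk}w_{kl}$. Because $\partial$ is multigraded, every product $v_{jk}w_{kl}$ occurring in this inner sum is the single monomial $\xb^{\ab_{ij}-\ab_{i-2,l}}$, so $\partial^2=0$ forces the scalar identity $\sum_k \lambda_{jk}\mu_{kl}=0$. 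Applying $*$ preserves this scalar identity, and moreover $v_{jk}^* w_{kl}^* = \xb^{\bb_{ij}-\bb_{i-2,l}}$ uniformly in $k$, so $\partial^{*2}(u_{ij}^*)=0$.

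Second, I show $\FF(P)$ is exact by matching multigraded strands. Fix a multidegree $\db\in\NN^n$ with $\db_k=0$ for $x_k\notin P$, and choose an integer $N$ larger than every non-$P$ coordinate appearing in any shift $\ab_{ij}$ of $\FF$; this is possible since there are only finitely many shifts. Let $\cb\in\NN^n$ be the multidegree whose $P$-coordinates agree with those of $\db$ and whose remaining coordinates all equal $N$. By the choice of $N$ the inequality $\ab_{ij}\leq\cb$ is equivalent to $\bb_{ij}\leq\db$, and the assignment $\xb^{\cb-\ab_{ij}}e_{ij}\mapsto \xb^{\db-\bb_{ij}}e_{ij}^*$ gives a $K$-linear bijection between the standard bases of the strands $(F_i)_\cb$ and $F_i(P)_\db$. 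A direct computation using $v_{jk}^*=\xb^{\bb_{ij}-\bb_{i-1,k}}$ shows that under this bijection both strand differentials reduce to the scalar matrix $(\lambda_{jk})$, producing an isomorphism of complexes of $K$-vector spaces between the $\cb$-strand of $\FF$ and the $\db$-strand of $\FF(P)$.

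Since $\FF$ resolves $I$, the $\cb$-strand is exact with $H_0$ equal to $I_\cb$. Transporting along the isomorphism yields $H_i(\FF(P))_\db=0$ for $i\geq 1$ and $H_0(\FF(P))_\db=I_\cb$. Finally, $I_\cb\neq 0$ precisely when some generator $u_{0,k}$ divides $\xb^\cb$, which by the choice of $N$ occurs if and only if $\bb_{0,k}\leq\db$ for some $k$, equivalently $\xb^\db\in I(P)$; hence $H_0(\FF(P))_\db=(I(P))_\db$ in every multidegree, and $\FF(P)$ is a multigraded free resolution of $I(P)$. The main technical point is verifying that the strand differentials really agree under the basis bijection; this rests crucially on multigradedness, which lets one absorb the monomial factors $v_{jk}$ uniformly by enlarging the ambient multidegree to $\cb$.
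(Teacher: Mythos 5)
Your argument is correct, and its overall strategy --- verify exactness of $\FF(P)$ multidegree by multidegree by matching a strand of $\FF(P)$ with a strand of $\FF$ --- is the same one the paper uses. The improvement is in how the two strands are matched, and it addresses a real subtlety. The paper constructs a map $\varphi\colon(F_i)_\ab\to F_i(P)_{\ab^*}$, $v_{ij}(u_{ij})\mapsto v_{ij}^*(u_{ij}^*)$, and asserts that it is an isomorphism for \emph{every} $\ab$; but in general it is only injective, since $u_{ij}^*\mid\xb^{\ab^*}$ does not imply $u_{ij}\mid\xb^{\ab}$. Already for $I=(x_1^2,x_1x_2)$, $P=(x_1)$, $\ab=(1,0)$ one has $(F_0)_\ab=0$ while $F_0(P)_{\ab^*}\cong K$. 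What is actually needed is, for each multidegree $\db$ of $S(P)$, to find \emph{one} $\cb$ with $\cb^*=\db$ for which the comparison is an isomorphism, and your choice --- $\cb$ agreeing with $\db$ on the $P$-coordinates and equal to a sufficiently large $N$ on the remaining coordinates, so that $\ab_{ij}\leq\cb$ is genuinely equivalent to $\bb_{ij}\leq\db$ --- does precisely this. You also spell out two points the paper leaves as ``readily seen'': that under the basis bijection both strand differentials reduce to the same scalar matrix $(\lambda_{jk})$, and (in your first paragraph) that $\FF(P)$ is a complex at all, via the observation that $\partial^2=0$ reduces by multigradedness to the scalar identities $\sum_k\lambda_{jk}\mu_{kl}=0$, which the substitution $*$ preserves. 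So yours is the more careful rendering of the same idea, and it repairs the surjectivity gap in the paper's strand-isomorphism claim.
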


\begin{proof}
Let $\ab\in \ZZ^n$ be an integer vector with non-negative entries. The $\ab$-graded piece of $\FF$ is a complex
\[
\FF_\ab\: 0\to (F_q)_\ab\to \cdots \to (F_1)_\ab\to (F_0)_\ab\to 0
\]
of finite dimensional $K$-vector spaces. To say that $\FF_\ab$ is a resolution of $I_\ab$, is equivalent to say that $\FF_\ab\to I_\ab\to 0$ is exact for all $\ab$.

We will show that the complexes of $K$-vector spaces
\begin{eqnarray}
\label{first}
0\to (F_q)_\ab\to \cdots \to (F_1)_\ab\to (F_0)_\ab\to I_\ab\to 0,
\end{eqnarray}
and
\begin{eqnarray}
\label{second}
0\to F_q(P)_{\ab^*}\to \cdots \to F_1(P)_{\ab^*}\to F_0(P)_{\ab^*}\to I(P)_{\ab^*}\to 0,
\end{eqnarray}
are isomorphic for all $\ab$. Here $\ab^*$ is defined by the equation $u^*=\xb^{\ab^*}$,  where $u=\xb^\ab$.

By showing this, since the complexes (\ref{first}) are exact for all $\ab$, the complexes (\ref{second}) are also exact for all $\ab$. This then shows that $\FF(P)$ is multigraded free resolution of $I(P)$, as desired.

The isomorphism between complex (\ref{first}) and complex (\ref{second}) is  established as follows: let $u=\xb^{\ab}$. Then $(F_i)_\ab=\Dirsum Kv_{ij}(u_{ij})$ with monomials $v_{ij}$, where the sum is taken over all $j$ with $v_{ij}(u_{ij})=u$. Similarly, $(F_i(P))_{\ab^*}=\Dirsum Kw_{ij}(u_{ij}^*)$ with monomials $w_{ij}$, where the sum is taken over all $j$ with $w_{ij}(u_{ij}^*)=u^*$. Since $u^*=(v_{ij}u_{ij})^*= v_{ij}^*u_{ij}^*$, it follows that $w_{ij}=v_{ij}^*$ for all $j$. Thus for each $i$ the map $\varphi\: (F_i)_\ab\to (F_i(P))_{\ab^*}$ with $v_{ij}(u_{ij})\mapsto v_{ij}^*(u_{ij}^*)$ is an isomorphism of $K$-vector spaces. It is readily seen, that for all $i>0$ the  sequence of maps $\varphi =(\varphi_i)_{i=0,\ldots,q}$ commutes with the differentials of $(\FF)_\ab$ and $\FF(P)_{\ab^*}$. Thus $\FF(P)_{\ab^*}$ is indeed an   exact  complex, and $\varphi$ is a complex homomorphism. Moreover,  $\varphi_0$   is compatible with the augmentation maps $(F_0)_\ab\to I_\ab$ and $(F_0(P))_{\ab^*}\to I(P)_{\ab^*}$. This completes the proof.
\end{proof}

In the example given by (\ref{simpleexample}), if $P=(x_1)$, then
\[
\FF(P): 0\to (x_1^2)\to (x_1^2)\dirsum (x_1)\to (x_1)\to 0,
\]
and $\partial^*(x_1^2)=1(x_1^2)-x_1(x_1)$. This example shows that $\FF(P)$ is in general no longer a minimal free resolution of $I(P)$, even though $\FF$ may be minimal.

\begin{proof}[Proof of Proposition~\ref{localization}]
Note that $\HS_i(I)(P)=(\xb^{\ab_{i1}}, \xb^{\ab_{i2}}, \ldots)(P)=(\xb^{\bb_{i1}}, \xb^{\bb_{i2}}, \ldots)$. Thus   the generators of $\HS_i(I)(P)$ correspond the $i$th homological shifts of $\FF(P)$ which is  a multigraded, but not necessarily a minimal  free resolution of $I(P)$. This implies that
$\HS_i(I(P))\subseteq \HS_i(I)(P)$.
\end{proof}

In our example given in (\ref{simpleexample}), we have $\HS_1(I)(P)=(x_1^2)$ and $\HS_1(I(P))=0$.

\medskip
For the last homological shift ideal of a monomial ideal we have the following interpretation.

\begin{Proposition}
\label{last}
Let $I\subset S$ be a monomial ideal, and $\mm$ be the graded maximal ideal of $S$. Furthermore, let  $(I:\mm)/I=\Dirsum_{i=1}^tK(-\ab_i) $ and $J=(\xb^{\ab_i}\:\; i=1,\ldots,t)$.  Then $\HS_{n-1}(I)=x_1x_2\cdots x_n J$.
\end{Proposition}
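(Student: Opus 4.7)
The plan is to read off the multigraded shifts of $F_{n-1}$ in $\FF$ by computing $\Tor_n^S(S/I,K)$ with the Koszul complex on $x_1,\ldots,x_n$, and to observe that the socle description of that Tor produces exactly the claimed shift by $\mathbf{1}=(1,\ldots,1)$.

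First, since prepending $S\to S/I$ to $\FF$ gives the minimal multigraded free resolution of $S/I$, and since $\projdim_S I\leq n-1$, the module $F_{n-1}$ is the top module of the minimal resolution of $S/I$. Therefore the multigraded shifts of $F_{n-1}$, counted with multiplicity, are exactly the multidegrees $\ab$ for which $\Tor_n^S(S/I,K)_\ab\neq 0$, and $\HS_{n-1}(I)$ is generated by the monomials $\xb^\ab$ for such $\ab$.

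Next, I would compute $\Tor_n^S(S/I,K)=H_n(\mathcal{K}_\bullet\otimes_S S/I)$, where $\mathcal{K}_\bullet=\mathcal{K}_\bullet(x_1,\ldots,x_n;S)$ is the Koszul complex, a minimal multigraded free resolution of $K=S/\mm$. The top module $\mathcal{K}_n\otimes_S S/I$ is a free rank-one $S/I$-module generated by $e_{[n]}$ in multidegree $\mathbf{1}$, and since there is no incoming differential, $\Tor_n^S(S/I,K)=\Ker(\partial_n)$, with
\[
\partial_n\bigl(\overline{\xb^\cb}\cdot e_{[n]}\bigr)=\sum_{i=1}^n (-1)^{i-1}\overline{x_i\xb^\cb}\cdot e_{[n]\setminus\{i\}}.
\]
In multidegree $\ab$ the space $(\mathcal{K}_n\otimes S/I)_\ab$ is spanned by $\overline{\xb^{\ab-\mathbf{1}}}\cdot e_{[n]}$ and is one-dimensional exactly when $\ab\geq\mathbf{1}$ and $\xb^{\ab-\mathbf{1}}\notin I$. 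Because the basis elements $e_{[n]\setminus\{i\}}$ have pairwise distinct multidegrees $\mathbf{1}-\eb_i$ and hence span distinct free $S/I$-summands of $\mathcal{K}_{n-1}\otimes S/I$, the above class lies in $\Ker\partial_n$ iff $x_i\xb^{\ab-\mathbf{1}}\in I$ for every $i$, i.e.\ iff $\xb^{\ab-\mathbf{1}}\in I:\mm$.

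Combining these conditions, $\Tor_n^S(S/I,K)_\ab$ is nonzero---and then one-dimensional over $K$---exactly when $\xb^{\ab-\mathbf{1}}$ represents a nonzero element of $(I:\mm)/I$, that is, when $\ab=\ab_i+\mathbf{1}$ for some $i\in\{1,\ldots,t\}$. Hence the multigraded shifts of $F_{n-1}$ are precisely $\{\ab_i+\mathbf{1}:i=1,\ldots,t\}$, each with multiplicity one, and
\[
\HS_{n-1}(I)=(\xb^{\ab_i+\mathbf{1}}:i=1,\ldots,t)=x_1x_2\cdots x_n\cdot J,
\]
as claimed. The only delicate point is tracking the $\mathbf{1}$-shift coming from the multidegree of $e_{[n]}$ and the independence of the $n$ summands of $\partial_n$ in $\mathcal{K}_{n-1}\otimes S/I$; both are routine, and the degenerate case where the socle of $S/I$ is zero falls out correctly since then $\projdim_S I<n-1$ and $\HS_{n-1}(I)=0=x_1\cdots x_n\cdot J$.
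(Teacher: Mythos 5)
Your proof is correct and follows essentially the same route as the paper's: both identify the multigraded shifts of $F_{n-1}$ with the multidegrees in $\Tor_n(S/I,K)\iso H_n(x_1,\ldots,x_n;S/I)$, observe that this Koszul homology has a basis given by socle elements $(u+I)\,e_1\wedge\cdots\wedge e_n$ with $u\in\mathcal{G}(J)$, and read off the extra $\mathbf{1}$-shift from the multidegree of the top Koszul generator. You simply carry out in detail the kernel computation that the paper invokes as known.
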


\begin{proof}
Let
\[
0\to F_{n-1}\to \cdots \to F_1\to F_0\to I\to 0
\]
be the minimal   multigraded free $S$-resolution of $I$ with  $F_{n-1}= \Dirsum_{i=1}^{r} S(-\ab_i)$. Then
 $\HS_{n-1}(I) =(\xb^{\ab_1},\ldots,\xb^{\ab_r})$.

There exists the following isomorphisms of multigraded graded modules.
\[
\Dirsum_{i=1}^rK(-\ab_i)\iso \Tor_{n-1}(K, I)\iso \Tor_n(K, S/I)\iso H_n(x_1,\ldots,x_n;S/I).
\]
Here $H_n(x_1,\ldots,x_n;S/I)$ denotes the $n$th Koszul homology of $S/I$ with respect to the sequence $x_1,\ldots,x_n$. Since the $K$-vector space $H_n(x_1,\ldots,x_n;S/I)$ admits a basis consisting of  the  elements $(u+I)e_1\wedge e_2\wedge \cdots \wedge e_n$ with $u\in \mathcal{G}(J)$, a comparison of multidegrees provides the desired result.
\end{proof}

Via polarization many questions about monomial ideals can be reduced to questions about squarefree monomial ideals. Let $u=x_1^{a_1}\cdots x_n^{a_n}$ be a monomial.  We define the {\em polarized monomial} of $u$ to be the squarefree monomial
\[
u^\wp=\prod_{i=1}^n\prod_{j=1}^{a_i}x_{ij}.
\]
where the $x_{ij}$ are a new set of variables.

Let $I\subset S$ be a monomial ideal with $\mathcal{G}(I)=\{u_1,\ldots,u_m\}$. The {\em polarization} of $I$ to be the ideal $I^\wp=(u_1^\wp, \ldots,u_m^\wp)$.

\begin{Proposition}
\label{pol}
Let $I$ be a monomial ideal. Then $\HS_j(I^\wp)=\HS_j(I)^\wp$ for all $j$.
\end{Proposition}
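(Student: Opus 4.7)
The plan is to deduce the proposition from the fact that polarization is a flat deformation. Let $\tilde S$ denote the polynomial ring in the polarization variables $x_{ij}$, and consider the standard depolarization sequence $\theta=(\theta_1,\dots,\theta_s)\subset\tilde S$ consisting of the linear forms $x_{ij}-x_{i,j-1}$ for $j\geq 2$. The two essential ingredients from polarization theory that I would invoke are: $\theta$ is a regular sequence on $\tilde S/I^\wp$, and $\tilde S/(I^\wp+(\theta))\iso S/I$ after the identification $x_{i1}=x_i$.

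First I would take the minimal multigraded free $\tilde S$-resolution $\tilde\FF$ of $I^\wp$, writing $\tilde F_j=\Dirsum_k\tilde S(-\bb_{jk})$ so that $\HS_j(I^\wp)=(\xb^{\bb_{jk}}\: k=1,\dots)$ by definition. Tensoring $\tilde\FF$ with $S=\tilde S/(\theta)$ produces a complex $\FF$ of free $S$-modules whose exactness follows from the regularity of $\theta$ modulo $I^\wp$, and whose minimality is preserved because every $\theta_i$ lies in the graded maximal ideal of $\tilde S$. Hence $\FF$ is the minimal multigraded free $S$-resolution of $I$.

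Next I would match the multigraded shifts. If $F_j=\tilde F_j\otimes_{\tilde S}S=\Dirsum_k S(-\ab_{jk})$, the depolarization map on exponent vectors sends $\bb_{jk}\mapsto\ab_{jk}$; in monomial terms this is precisely the statement $\xb^{\bb_{jk}}=(\xb^{\ab_{jk}})^\wp$ for each $k$. Reading off the generators on both sides then yields
\[
\HS_j(I^\wp)=(\xb^{\bb_{jk}}\: k=1,\dots)=\bigl((\xb^{\ab_{jk}})^\wp\: k=1,\dots\bigr)=\HS_j(I)^\wp,
\]
as desired.

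The main obstacle is confirming that $\tilde\FF\otimes_{\tilde S}S$ really is \emph{the} minimal multigraded $S$-resolution of $I$, rather than merely \emph{a} resolution. The regularity of $\theta$ on $\tilde S/I^\wp$ is what guarantees exactness after tensoring, and the fact that every $\theta_i$ lies in the graded maximal ideal of $\tilde S$ is what preserves minimality; both are classical consequences of the theory of polarization. Once these are in hand, the remaining content of the argument is routine combinatorial bookkeeping of how polarization acts on exponent vectors of the multigraded shifts.
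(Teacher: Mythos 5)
Your strategy of depolarizing a resolution of $I^\wp$ is a valid alternative to the paper's approach, which cites Sbarra's theorem that the minimal multigraded resolution of $I^\wp$ is obtained by polarizing that of $I$. However, there is a gap at the step where you identify the multigraded shifts. You write that the depolarization map $\bb_{jk}\mapsto\ab_{jk}$ ``in monomial terms'' is precisely the statement $\xb^{\bb_{jk}}=(\xb^{\ab_{jk}})^\wp$, but this is not automatic. Depolarization (summing the exponents of $x_{ij}$ over $j$ into the exponent of $x_i$) is a many-to-one map on exponent vectors; knowing that $\bb_{jk}$ depolarizes to $\ab_{jk}$ does not by itself tell you that $\bb_{jk}$ is the specific ``consecutive'' lift prescribed by polarization. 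A squarefree monomial in the $x_{ij}$ need not be a polarized monomial (for instance $x_{11}x_{13}$ without $x_{12}$), so you must rule out such shifts among the $\xb^{\bb_{jk}}$.

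The gap is fillable. Every multigraded shift of a monomial ideal lies in its LCM lattice, i.e., is the least common multiple of a subset of the minimal generators. The minimal generators of $I^\wp$ are polarized monomials $u^\wp$, and an LCM of polarized monomials is again a polarized monomial, since $\lcm(u^\wp, v^\wp)=(\lcm(u,v))^\wp$: polarization turns componentwise maxima in $\ZZ^n$ into unions of initial segments in the $x_{ij}$-blocks. Hence every $\xb^{\bb_{jk}}$ is of the form $w^\wp$ for some monomial $w$, and necessarily $w=\xb^{\ab_{jk}}$. With this observation inserted, your argument is complete and arguably more self-contained than the paper's one-line appeal to Sbarra.
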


\begin{proof}
Let $\FF$ be the minimal multigraded free $S$-resolution of $I$. By using the notation as in the proof of Proposition~\ref{localization}, we may write $F_i=\Dirsum_{j}(\xb^{\ab_{ij}})$ for $i=0,\ldots, \projdim(I)$. It follows from a result of Sbarra \cite[Corollary 4.8]{Sb} that $I^\wp$ admits  the minimal multigraded free resolution $\GG$ (over the polynomial ring with the required variables), where $\G_i= \Dirsum_{j}((\xb^{\ab_{ij}})^\wp)$ for $i= 0.\ldots, \projdim (I^\wp)$. This yields the desired conclusion.
\end{proof}

As an example we apply Proposition \ref{pol} to compute a certain homological shift ideal of a whisker graph. Let G be a finite simple graph on the vertex set $[n]=\{1, \ldots , n\}$. The {\em whisker graph}
$G^*$ of $G$  is the graph with the vertex set $V(G^*)=\{1, \ldots, n\} \cup \{1', \ldots, n'\}$ and the edge set $E(G^*)=E(G) \cup \{ \{1, 1'\}, \{2, 2'\}, \ldots, \{n, n'\} \}$.  We identify the edge ideal $I(G^*)$ of $G^*$ with the monomial ideal $(I(G), x_1y_1,\ldots,x_ny_n)$ in $K[x_1,\ldots,x_n,y_1,\ldots,y_n]$, where $K$ is a field.  Here $I(G)=(x_ix_j\:\; \{i,j\}\in E(G)\}$ is the edge ideal of $G$. We set $I = ( I(G) , x_1^2 , \ldots, x_n^2 )$. Then,  $I(G^*)= I^{\wp}$, where for simplicity  we set $x_i = x_{i1} , y_i = x_{i2}$, for $i= 1, \ldots, n$.

Let $\Delta(G)$ be the independence complex of $G$. Its faces consist of all independent sets $F$, that is, sets  $F\subseteq [n]$ which contain no edge of $G$. It is obvious that the monomials $x_F$ with $F\in \Delta(G)$ form a $K$-basis of $K[x_1,\ldots,x_n]/I$. Let $\Fc(\Delta(G))$ denote the set of facets of $\Delta(G)$. Then it follows that  $(I:\mm)/I$ is generated by the monomials $x_F$ with $F\in \Fc(\Delta(G))$. Thus, Proposition~\ref{last} implies that $\HS_{n-1}(I)=(x_{[n]}x_F\:\; F\in \Fc(\Delta(G)))$. Now we apply Proposition~\ref{pol} and obtain that $\HS_{n-1}(I(G^*))$ is generated by the monomials $(x_{[n]}x_F)^\wp$ with  $F\in \Fc(\Delta(G))$. Note that $(x_{[n]}x_F)^\wp=x_{[n]}y_F$, where $y_F=\prod\limits_{i\in F}y_i$. Hence we have shown

\begin{Corollary}
\label{whisker}
$\HS_{n-1}(I(G^*))=(x_{[n]}y_F\:\; F\in \Fc(\Delta(G)))$.
\end{Corollary}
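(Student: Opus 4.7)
My plan is to combine three ingredients already available in the excerpt: the identification $I(G^*) = I^{\wp}$ for the ideal $I = (I(G), x_1^2, \ldots, x_n^2) \subset K[x_1,\ldots,x_n]$ noted just above the statement, Proposition~\ref{pol} which commutes polarization with every homological shift ideal, and Proposition~\ref{last} which reads off $\HS_{n-1}$ from the socle $(I:\mm)/I$. Once these are chained, the proof reduces to two elementary tasks: computing the socle of $I$ and polarizing a specific monomial.

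First I would compute $(I:\mm)/I$. Since $x_i^2 \in I$ for every $i$, the quotient $S/I$ has a $K$-basis consisting of the classes $x_F + I$ with $F$ a squarefree subset of $[n]$, and the edge-ideal part $I(G) \subseteq I$ kills exactly those $x_F$ for which $F \notin \Delta(G)$. To test whether $x_F + I$ belongs to the socle, note that $x_i x_F \in I$ automatically when $i \in F$ (because $x_i^2 \in I$), while for $i \notin F$ one has $x_i x_F \in I$ iff $F \cup \{i\}$ is not a face of $\Delta(G)$. Thus $x_F + I$ is a socle element precisely when $F$ cannot be enlarged within $\Delta(G)$, i.e., $F \in \Fc(\Delta(G))$. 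Feeding this into Proposition~\ref{last} yields
\[
\HS_{n-1}(I) \;=\; x_{[n]} \cdot (x_F : F \in \Fc(\Delta(G))) \;=\; (x_{[n]} x_F : F \in \Fc(\Delta(G))).
\]

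Finally I would apply Proposition~\ref{pol} to cross over to $I(G^*) = I^{\wp}$. Each generator decomposes as $x_{[n]} x_F = \prod_{i \in F} x_i^2 \cdot \prod_{i \notin F} x_i$, so under polarization (with the relabeling $x_{i1} = x_i$, $x_{i2} = y_i$) the factor $x_i^2$ becomes $x_i y_i$ for $i \in F$ while each linear $x_i$ with $i \notin F$ is unchanged. Multiplying out gives $(x_{[n]} x_F)^{\wp} = x_{[n]} y_F$, and Proposition~\ref{pol} then delivers
\[
\HS_{n-1}(I(G^*)) \;=\; \HS_{n-1}(I)^{\wp} \;=\; (x_{[n]} y_F : F \in \Fc(\Delta(G))),
\]
as claimed. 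There is no serious obstacle: the only point needing genuine verification is the socle identification, a standard Stanley--Reisner-style check; everything else is bookkeeping supplied by the two cited propositions.
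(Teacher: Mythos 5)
Your proposal is correct and follows essentially the same route as the paper: identify $I(G^*)=I^\wp$, compute the socle of $S/I$ via the Stanley--Reisner structure to get $\HS_{n-1}(I)=(x_{[n]}x_F : F\in\Fc(\Delta(G)))$ from Proposition~\ref{last}, then transport through Proposition~\ref{pol} using $(x_{[n]}x_F)^\wp=x_{[n]}y_F$. The only difference is that you spell out the socle verification in more detail than the paper does.
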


\section{Homological shifts of $\cb$-bounded strongly stable principal ideals}

Throughout this section, $\cb\in \mathbb{Z}^n$ is an integer vector with non-negative entries.

\begin{Definition}{\em
 A monomial ideal $I\subset S$ is called {\em $\cb$-bounded strongly stable}, if each monomial $u\in \mathcal{G}(I)$ is $\cb$-bounded, and for all $i<j$  for which $x_j|u$  and $x_i(u/x_j)$ is $\cb$-bounded, it follows that $x_i(u/x_j)\in I$.}
 \end{Definition}

Let $u_1,\ldots,u_m\in S$ be $\cb$-bounded monomials. The smallest $\cb$-bounded strongly stable ideal containing $u_1,\ldots,u_m$ is denoted by $B^\cb(u_1,\ldots,u_m)$. A monomial ideal $I$ is called a $\cb$-bounded  strongly  stable principal ideal, if there exists a $\cb$-bounded monomial $u$ such that $I=B^{\cb}(u)$.  The smallest strongly stable ideal containing $u_1,\ldots,u_m$ (with no restrictions on the exponents) is denoted by $B(u_1,\ldots,u_m)$.

\medskip
A well-known example of ideals of this type are the so-called  {\em Veronese type ideals}. Its definition is given at the beginning of section 3. In our terminology these are the ideals of the form $I=B(x_n^d)^{\leq\cb}$.

The main result of this section is the following

\begin{Theorem}
\label{mainHSj}
Let $I$ be  a $\cb$-bounded strongly stable principal ideal. Then $\HS_j(I)$ has linear quotients for all $j$.
\end{Theorem}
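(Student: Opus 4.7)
The plan is to combine three ingredients that the authors have lined up in the excerpt: the reduction \textbf{Lemma~\ref{principalbound}} identifying $\cb$-bounded principal strongly stable ideals as restrictions of principal strongly stable ideals, the restriction identity \textbf{Corollary~\ref{hsjrestriction}}, and the Bayati--Jahangiri--Talebi result \cite[Theorem 2.4]{BJT} that $\HS_j(B(u))$ has linear quotients for every principal strongly stable ideal $B(u)$.

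First, I would write $I = B(u)^{\leq \cb}$ for some $\cb$-bounded monomial $u$, which is exactly what \textbf{Lemma~\ref{principalbound}} provides: the $\cb$-bounded principal strongly stable ideal generated by $u$ coincides with the ideal obtained from the ordinary principal strongly stable ideal $B(u)$ by discarding all generators that violate the $\cb$-bound. Applying \textbf{Corollary~\ref{hsjrestriction}} I then obtain the crucial identity
\[
\HS_j(I) \;=\; \HS_j\bigl(B(u)^{\leq \cb}\bigr) \;=\; \HS_j(B(u))^{\leq \cb}.
\]
By \cite[Theorem 2.4]{BJT} the ambient ideal $\HS_j(B(u))$ has linear quotients with respect to some explicit ordering of its generators (the ordering in \cite{BJT} is of combinatorial nature, given in terms of the strongly stable structure).

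The remaining point, and the one I would flag as the main obstacle, is to argue that the linear quotients property descends from a monomial ideal $J$ to its restriction $J^{\leq \cb}$. This is exactly where \textbf{Lemma~\ref{criterion}} enters: it characterizes linear quotients in a way that is preserved under the operation of deleting generators that fail the $\cb$-bound. The verification amounts to checking that if $u_1,\ldots,u_m$ is a linear quotients ordering of $\mathcal{G}(J)$, then the subsequence consisting of those $u_i$ which are $\cb$-bounded is still a linear quotients ordering of $\mathcal{G}(J^{\leq \cb}) = \{u_i : u_i \text{ is } \cb\text{-bounded}\}$ (using \textbf{Remark~\ref{easy2}}). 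The subtlety is that the colon ideal $(u_{i_1},\ldots,u_{i_{k-1}}):u_{i_k}$ computed inside $J^{\leq\cb}$ could conceivably be larger than in $J$; Lemma~\ref{criterion} should provide a description of the generators of these colon ideals that shows they remain generated by variables after restriction.

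Putting the three steps together, $\HS_j(I) = \HS_j(B(u))^{\leq \cb}$ inherits linear quotients from $\HS_j(B(u))$, which completes the proof. Note that the argument works uniformly in $j$, so no induction on $j$ or on the projective dimension is needed; all the inductive content is already packaged inside \cite[Theorem 2.4]{BJT} and inside the proof of \textbf{Lemma~\ref{principalbound}}.
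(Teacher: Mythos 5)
Your proposal follows the same route as the paper: reduce via Lemma~\ref{principalbound} to $I=B(u)^{\leq\cb}$, commute $\HS_j$ with restriction via Corollary~\ref{hsjrestriction}, invoke \cite[Theorem~2.4]{BJT}, and then show that the linear quotients property descends from $\HS_j(B(u))$ to its restriction. You correctly identify this last descent as the real work and correctly point to Lemma~\ref{criterion}; the paper packages that step as Proposition~\ref{linearquotients}. One small imprecision worth noting: Lemma~\ref{criterion} does not describe the generators of the colon ideals after restriction, as your last paragraph suggests. Rather, it converts ``linear quotients with respect to $u_1,\dots,u_m$'' into the statement that every initial segment $(u_1,\dots,u_t)$ has linear resolution; one then observes that the $t$th initial segment of $I^{\leq\cb}$ equals $(I_{i_t})^{\leq\cb}$ and applies Corollary~\ref{linear} (the restriction lemma in the form: linear resolution is preserved under $(\cdot)^{\leq\cb}$) before converting back to linear quotients via Lemma~\ref{criterion} again. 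So the criterion is used on both ends, with the restriction lemma supplying the middle step, not a direct analysis of colon ideals.
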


The proof of this theorem requires some preparation.

\medskip

Let $u$ and $v$ be $\cb$-bounded monomials of same degree $d$.   Then we write $v\prec_\cb u$ if and only if  $v\in B^\cb(u)$. This is a partial order on the $\cb$-bounded monomials of degree $d$. We also write $v\prec u$ if and only if $v\in B(u)$.

\begin{Lemma}
\label{principalbound}
Let $u\in S$ be a $\cb$-bounded monomial. Then $B^\cb(u)=B(u)^{\leq\cb}$.
\end{Lemma}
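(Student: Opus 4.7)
The plan is to prove the two inclusions of the claimed equality separately. The inclusion $B^\cb(u)\subseteq B(u)^{\leq \cb}$ is essentially by unwinding definitions: every $\cb$-bounded strongly stable move is in particular an ordinary strongly stable move, so every monomial in $\mathcal{G}(B^\cb(u))$ lies in $\mathcal{G}(B(u))$ and is $\cb$-bounded, hence belongs to $B(u)^{\leq\cb}$ (the well-definedness here being guaranteed by Remark~\ref{easy2}).

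The substantive direction is $B(u)^{\leq \cb}\subseteq B^\cb(u)$. Since both sides are monomial ideals, it suffices to show that every generator of $B(u)^{\leq\cb}$ — that is, every $\cb$-bounded monomial $v$ in $\mathcal{G}(B(u))$ — belongs to $B^\cb(u)$. Such a $v$ has the same degree as $u$ and arises from $u$ by some sequence of elementary strongly stable moves, but these moves may pass through monomials that are not $\cb$-bounded. The task is therefore to produce an alternative path from $u$ to $v$ all of whose intermediate monomials are $\cb$-bounded.

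Writing $u=x_1^{a_1}\cdots x_n^{a_n}$ and $v=x_1^{b_1}\cdots x_n^{b_n}$, I will use the standard characterization that, for monomials of equal degree, $v\prec u$ if and only if $\sum_{i\leq k} b_i \geq \sum_{i\leq k} a_i$ for every $k$. The heart of the argument is a single-step claim: if $v\prec u$, $v\neq u$, and both $u,v$ are $\cb$-bounded, then there exists a $\cb$-bounded monomial $u'$ obtained from $u$ by one elementary strongly stable move, with $v\prec u'$. To construct $u'$, let $p$ be the smallest index with $b_p>a_p$ (such $p$ exists, and moreover $\sum_{i<p} b_i=\sum_{i<p} a_i$ by minimality), and let $q>p$ be the smallest index with $a_q>b_q$ (existence follows from equality of total degrees together with $b_p>a_p$; in particular $a_q\geq 1$). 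Set $u':=x_p\,u/x_q$. Then $u'$ is $\cb$-bounded because its exponent at $p$ is $a_p+1\leq b_p\leq c_p$ while all other exponents do not exceed those of $u$; it is obtained from $u$ by an elementary strongly stable move; and a direct partial-sum check on the range $p\leq k<q$ — where the partial sums of $u'$ exceed those of $u$ by exactly $1$, while those of $v$ exceed those of $u$ by at least $1$ (since they agree up to $p-1$, jump by $b_p-a_p\geq 1$ at $p$, and on $p<i<q$ one has $b_i\geq a_i$ by minimality of $q$) — gives $v\prec u'$.

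Iterating this step yields a sequence of $\cb$-bounded elementary moves from $u$ toward $v$; it terminates because the linear weight $\sum_i i\,a_i$ strictly decreases at each step (as $q>p$) and is bounded below by $\sum_i i\,b_i$. The main obstacle is the verification of the three simultaneous properties of $u'$ in the key lemma — in particular, the compatibility of the $\cb$-boundedness constraint with maintaining $v\prec u'$ — and this is precisely the point at which the hypothesis that $v$ itself is $\cb$-bounded is genuinely used, via the inequality $a_p+1\leq b_p\leq c_p$.
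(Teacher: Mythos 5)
Your proof is correct, and it takes a route that, while built on the same overall strategy as the paper's (produce a sequence of $\cb$-bounded elementary moves carrying $u$ down to $v$), differs in the implementation of the key step. The paper writes $u = x_{i_1}\cdots x_{i_d}$ and $v = x_{j_1}\cdots x_{j_d}$ in sorted-index form, locates the first position $\ell+1$ where $i_{\ell+1} > j_{\ell+1}$, and takes a \emph{unit decrement} $u_1 = x_{i_{\ell+1}-1}(u/x_{i_{\ell+1}})$; the $\cb$-boundedness of $u_1$ then requires a small contradiction argument comparing the block of equal indices in $u$ with the corresponding block in $v$. You instead work with exponent vectors and the dominance (partial-sum) characterization $v\prec u \iff \sum_{i\leq k} b_i \geq \sum_{i\leq k} a_i$ for all $k$, and perform the \emph{full exchange} $u' = x_p(u/x_q)$ where $p$ is the first index with $b_p > a_p$ and $q>p$ the first with $a_q > b_q$. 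This buys two things: the $\cb$-boundedness of $u'$ becomes a one-line observation ($a_p + 1 \leq b_p \leq c_p$, which is also where the $\cb$-boundedness of $v$ enters most transparently), and $v\prec u'$ is a direct partial-sum check on the window $p\leq k < q$. The paper's version avoids invoking the dominance characterization explicitly, which keeps it self-contained but makes the boundedness step more delicate. One small wording nit: you attribute $\sum_{i<p}b_i = \sum_{i<p}a_i$ to ``minimality'' of $p$, but minimality alone gives only $b_i\leq a_i$ for $i<p$; the equality needs the dominance inequality $\sum_{i<p}b_i\geq\sum_{i<p}a_i$ as well. This does not affect the correctness of the argument.
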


\begin{proof}  The inclusion $B^\cb(u)\subseteq B(u)^{\leq\cb}$ is obvious.  Conversely, assume that   $u=x_{i_1}\cdots x_{i_d}$ with $i_1\leq i_2\leq \cdots \leq i_d$, and let  $v\in B(u)^{\leq\cb}$, $v=x_{j_1}\cdots x_{j_d}$  with $j_1\leq j_2\leq \cdots \leq j_d$. We may assume that $v\neq u$.
Let $\ell+1$ be the smallest integer such that $i_{\ell+1}>j_{\ell+1}$, and $u_1=x_{i_{\ell+1}-1}(u/x_{i_{\ell+1}})$.
Then  $u_1$ is $\cb$-bounded.  Otherwise, $i_{\ell+1}-1=i_{\ell}=i_{\ell-1}=\cdots=i_{\ell-c_{\ell}+1}$.
 By the choice of $\ell$ it follows that $j_\ell=j_{\ell-1}= \cdots =j_{{\ell}-c_{\ell}+1}$.
Since $v$ is $\cb$-bounded, it follows that $j_{\ell+1}>j_{\ell}$. Moreover,
$j_\ell=i_\ell$. Then $i_{\ell}<j_{\ell+1}<i_{\ell+1}=i_{\ell}+1$, a contradiction.
This shows that indeed
$u_1$ is $\cb$-bounded, and hence $u_1\prec_\cb u$. Also we have  $v\in B(u_1)^{\leq\cb}$.
In fact, by the same argument,  if $u_1=x_{k_1}\cdots x_{k_d}$, we set $u_2=x_{k_{\ell'+1}-1}(u_1/x_{k_{\ell'+1}})$, where $\ell'+1$ is the smallest integer such that $k_{\ell'+1}>j_{\ell'+1}$, then
$u_2\prec_\cb u_1$ and $v\in B(u_2)^{\leq\cb}$. So we have a chain $u\succ_\cb u_1\succ _\cb u_2\succ_\cb \cdots \succ_\cb u_m$, where $u_m=v$. Thus $v\prec_\cb u$ and then $v\in B^\cb(u)$.

By induction we may assume that $B(u_1)^{\leq\cb}=B^\cb(u_1)$. Therefore,  $v\in B^\cb(u_1)$. The desired conclusion follows since $B^\cb(u_1)\subset B^\cb(u)$.

\end{proof}

An immediate  consequence of Lemma~\ref{restriction} is

\begin{Corollary}
\label{linear}
Let $I$ be a monomial ideal generated in degree $d$. If $I$ has linear resolution, then $I^{\leq\cb}$ has linear resolution.
\end{Corollary}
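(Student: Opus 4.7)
The plan is to derive this directly from the restriction lemma (Lemma~\ref{restriction}), using nothing more than the definition of linear resolution.

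First, I would record the assumption in terms of multigraded shifts. Let $\FF$ be the minimal multigraded free $S$-resolution of $I$, with $F_i=\Dirsum_{j}S(-\ab_{ij})$. Since $I$ is generated in degree $d$ and has linear resolution, every shift satisfies $|\ab_{ij}|=d+i$ for all $i\geq 0$ and all $j$.

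Next, by Lemma~\ref{restriction}, the subcomplex $\FF^{\leq\cb}$ with
\[
F_i^{\leq\cb}=\Dirsum_{\ab_{ij}\leq\cb}S(-\ab_{ij})
\]
is a minimal multigraded free $S$-resolution of $I^{\leq\cb}$. Since $F_i^{\leq\cb}$ is obtained simply by discarding some of the summands of $F_i$, each remaining shift $\ab_{ij}$ still satisfies $|\ab_{ij}|=d+i$. Moreover $I^{\leq\cb}$ is generated by a subset of $\mathcal{G}(I)$, hence in degree $d$.

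Thus $\FF^{\leq\cb}$ is a minimal multigraded free resolution of a degree-$d$ ideal whose $i$th free module is generated in degree $d+i$, which is the definition of a linear resolution. No step here is a real obstacle: the only point that needs care is that the restriction lemma produces a \emph{minimal} resolution (so the degree condition on shifts actually certifies linearity), but this is exactly the content of Lemma~\ref{restriction}.
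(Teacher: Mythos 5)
Your proof is correct and is exactly the argument the paper intends: the paper states the corollary as an immediate consequence of Lemma~\ref{restriction} without spelling out the details, and your write-up simply makes explicit the (easy) verification that passing to the subcomplex $\FF^{\leq\cb}$ preserves the linearity of the shifts. No gaps.
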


The following lemma is needed in the sequel.

\begin{Lemma}
\label{criterion}
Let $I$ be a monomial ideal generated in degree $d$ with $\mathcal{G}(I)=\{u_1,\ldots, u_m\}$. Then the following conditions are equivalent:
\begin{enumerate}
\item[(a)] $I$  has linear quotients with respect to $u_1,\ldots, u_m$.
\item[(b)]  the ideals $I_j=(u_1,\ldots,u_j)$ have linear resolution for $j=1,\ldots,m$.
\end{enumerate}
\end{Lemma}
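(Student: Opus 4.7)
The plan is to establish both directions via short exact sequences together with the standard regularity estimate, exploiting that a monomial ideal generated in a single degree $d$ has linear resolution if and only if its Castelnuovo--Mumford regularity equals $d$.

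For (a) $\Rightarrow$ (b), I would first observe that linear quotients are inherited by truncation: if $I$ has linear quotients with respect to the order $u_1,\ldots,u_m$, then each $I_j=(u_1,\ldots,u_j)$ has linear quotients with respect to the restricted order $u_1,\ldots,u_j$, since the colon ideals $(u_1,\ldots,u_{i-1}):u_i$ are unchanged for $i\leq j$. I would then invoke the well-known fact that any monomial ideal generated in a single degree with linear quotients has linear resolution (proved by iterated mapping cones built from the sequences $0\to S/((u_1,\ldots,u_{i-1}):u_i)(-d)\to S/I_{i-1}\to S/I_i\to 0$, where the leftmost module is resolved by a Koszul complex since the colon ideal is generated by variables). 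Applied to each $I_j$, this yields (b). The base case $j=1$ is trivial since $(u_1)$ is principal.

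For (b) $\Rightarrow$ (a), I would proceed by induction on $j\geq 2$. The tool is the multiplication-by-$u_j$ short exact sequence
\[
0\To (S/(I_{j-1}:u_j))(-d)\To S/I_{j-1}\To S/I_j\To 0.
\]
By hypothesis, $\reg(S/I_{j-1})=\reg(S/I_j)=d-1$. The standard bound $\reg(A)\leq \max(\reg(B),\reg(C)+1)$ for a short exact sequence $0\to A\to B\to C\to 0$ then yields
\[
\reg(S/(I_{j-1}:u_j))+d\leq \max(d-1,\, d)=d,
\]
so $\reg(S/(I_{j-1}:u_j))\leq 0$, equivalently $\reg((I_{j-1}:u_j))\leq 1$. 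Since $(I_{j-1}:u_j)$ is a monomial ideal and the maximum degree of its minimal generators is bounded by its regularity, it must be generated in degree at most one, hence by variables. This is exactly the linear quotients condition at step $j$.

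The main obstacle is the (b) $\Rightarrow$ (a) direction. The arithmetic with regularity is routine, but one must carefully justify the passage from $\reg\leq 1$ to ``generated by variables'' for the monomial ideal $(I_{j-1}:u_j)$; this rests on the fact that minimal generators of a graded ideal lie in degrees bounded by its regularity, together with the observation that the only monomials of degree one are the variables. One should also note that $(I_{j-1}:u_j)$ is always a proper nonzero ideal because $u_j$ is a minimal generator of $I$ and hence does not lie in $I_{j-1}$, so no degenerate case intervenes.
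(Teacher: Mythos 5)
Your proof is correct and uses the same key tool as the paper, namely the short exact sequence $0\to (S/(I_{j-1}:u_j))(-d)\to S/I_{j-1}\to S/I_j\to 0$ (the paper writes the equivalent sequence $0\to I_{j-1}\to I_j\to (S/(I_{j-1}:u_j))(-d)\to 0$). For the implication (b)$\Rightarrow$(a), you package the resulting information via the standard regularity estimate for short exact sequences, whereas the paper unwinds the long exact sequence of $\Tor$ explicitly to show that $\Tor_i(K,S/(I_{j-1}:u_j))_{i+\ell}$ vanishes for $\ell\neq 0,-1$ and then kills the $\ell=-1$ case by the degree bound on a module generated in degree $0$; these are exactly the same computation, phrased with and without the regularity functor. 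For (a)$\Rightarrow$(b) you take a modest shortcut: rather than running another induction through the long exact sequence as the paper does, you observe directly that the linear quotients order restricts to each truncation $I_j$ and then invoke the well-known fact that linear quotients implies linear resolution. This is a bit more economical and is also rigorous, given that the colon ideals $(u_1,\ldots,u_{i-1}):u_i$ for $i\leq j$ are manifestly unchanged. Your remarks about $(I_{j-1}:u_j)$ being a proper ideal (so there is no collision with the unit ideal) and about regularity bounding generator degrees are the right sanity checks.
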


\begin{proof}
For any $j<m$, we obtain the following short  exact sequence
\[
0\to I_j\to I_{j+1}\to (S/(I_j:u_{j+1}))(-d)\to 0,
\]
because $I_{j+1}/I_j\iso (S/(I_j:u_{j+1}))(-d)$. This short exact sequence induces the long exact sequence
\begin{eqnarray}
\label{long}
&&\Tor_i(K, I_{j})_{i+\ell} \to \Tor_i(K, I_{j+1})_{i+\ell}\to \Tor_i(K,S/(I_j:u_{j+1}))_{i+\ell-d}\\
&&\to \Tor_{i-1}(K, I_{j})_{i-1+(\ell+1)}\to \cdots\nonumber
\end{eqnarray}

 (a)\implies(b): We apply induction on $j$. The case $j=1$ is clear. Suppose that the statement holds for $j$. Since
 $I_{j+1}/I_j  \iso u_{j+1}(S/(I_j:u_{j+1}))$, we know that $(S/(I_j:u_{j+1}))(-d)$ has $d$-linear resolution. It follows that $\Tor_i(K,S/(I_j:u_{j+1}))_{i+\ell-d}=0$ for any $\ell\neq d$. Therefore, we obtain that $\Tor_i(K, I_{j+1})_{i+\ell}=0$  for $\ell\neq d$ by long exact sequence (\ref{long}) and  inductive assumption. Thus the assertion follows.

 (b)\implies (a): Since we assume that $I_j$ and  $I_{j+1}$ have $d$-linear resolution it follows from (\ref{long}) that $\Tor_i(K,S/(I_j:u_{j+1}))_{i+\ell-d}=0$ for $\ell\neq d, d-1$. Therefore, $\Tor_i(K,S/(I_j:u_{j+1}))_{i+\ell}=0$ if $\ell\neq 0, - 1$. Since $S/(I_j:u_{j+1})$ is generated in degree $0$, it follows that $\Tor_i(K,(S/(I_j:u_{j+1}))_{i+\ell}=0$  if $\ell=-1$. This shows that $\Tor_i(K,(S/(I_j:u_{j+1}))_{i+\ell}=0$ for $\ell\neq 0$. Hence $I_j:u_{j+1}$ has $1$-linear resolution. Since $I_j:u_{j+1}$ is monomial ideal, this implies that $I_j:u_{j+1}$ is generated by variables.
\end{proof}

By using Lemma~\ref{criterion} and Corollary~\ref{linear} we get

\begin{Proposition}
\label{linearquotients}
Let $I$ be a monomial ideal generated in degree $d$. If $I$ has linear quotients, then $I^{\leq \cb}$ has linear quotients.
\end{Proposition}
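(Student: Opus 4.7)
The plan is to combine Lemma~\ref{criterion} with Corollary~\ref{linear} by passing through the filtration given by the linear quotients ordering.

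Let $u_1,\ldots,u_m$ be the linear quotients ordering of $\mathcal{G}(I)$, and let $u_{i_1},\ldots,u_{i_r}$ be the subsequence obtained by keeping only the $\cb$-bounded generators (with $i_1<i_2<\cdots<i_r$). By Remark~\ref{easy2}, this subsequence is exactly $\mathcal{G}(I^{\leq\cb})$. My goal is to show that $I^{\leq\cb}$ has linear quotients with respect to this induced order. By the implication (b)$\implies$(a) in Lemma~\ref{criterion}, it suffices to verify that every initial ideal
\[
J_k := (u_{i_1},\ldots,u_{i_k}), \qquad k=1,\ldots,r,
\]
has a linear resolution.

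The key observation is that $J_k$ coincides with the restriction $(I_{i_k})^{\leq\cb}$, where $I_j=(u_1,\ldots,u_j)$. Indeed, among the generators $u_1,\ldots,u_{i_k}$ of $I_{i_k}$, the $\cb$-bounded ones are precisely $u_{i_1},\ldots,u_{i_k}$, so the identification follows from Remark~\ref{easy2}. Now, since $I$ has linear quotients with respect to $u_1,\ldots,u_m$, the implication (a)$\implies$(b) of Lemma~\ref{criterion} gives that $I_{i_k}$ has a linear resolution. Applying Corollary~\ref{linear} to $I_{i_k}$, we conclude that $(I_{i_k})^{\leq\cb}=J_k$ has a linear resolution as well.

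Having established this for every $k$, the reverse implication (b)$\implies$(a) in Lemma~\ref{criterion} yields that $I^{\leq\cb}$ has linear quotients with respect to $u_{i_1},\ldots,u_{i_r}$, which is the desired conclusion. There is no real obstacle here: the only point that requires a moment's thought is the compatibility $(I_{i_k})^{\leq\cb}=(u_{i_1},\ldots,u_{i_k})$, which is guaranteed by the well-definedness of the restriction operation noted in Remark~\ref{easy2}. The proof is essentially a two-step ``lift, restrict, descend'' argument across Lemma~\ref{criterion}.
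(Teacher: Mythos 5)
Your proof is correct and follows essentially the same route as the paper: both identify the initial ideal $(u_{i_1},\ldots,u_{i_k})$ of the restricted generating sequence with $(I_{i_k})^{\leq\cb}$, then apply Lemma~\ref{criterion} in both directions together with Corollary~\ref{linear}. The only cosmetic difference is that you explicitly invoke Remark~\ref{easy2} for the identification, while the paper leaves that step implicit.
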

\begin{proof} Let $\mathcal{G}(I)=\{u_1,\ldots, u_m\}$ and  assume $I$  has linear quotients with respect to $u_1,\ldots, u_m$.
Then the ideals $I_j=(u_1,\ldots,u_j)$ have linear resolution for $j=1,\ldots,m$,  by Lemma \ref{criterion}.
Let $I^{\leq\cb}=$ where $u_{i_\ell}$'s are the $\cb$-bounded elements of $\mathcal{G}(I)$ and $i_1<i_2<\cdots<i_s$.  By Lemma \ref{criterion}, it is enough to show that $(I^{\leq\cb})_t=(u_{i_1},\ldots,u_{i_t})$ has linear resolution for $t=1,\ldots,s$. We have
$(I^{\leq\cb})_t=(u_{i_1},\ldots,u_{i_t})=J^{\leq\cb}$, where $J=(u_k:\ 1\leq k\leq i_t)$.
Since $I$ has linear quotients and $J=I_{i_t}$, by Lemma \ref{criterion}, $J$ has a linear resolution. So by Corollary \ref{linear}, $J^{\leq\cb}$ has a linear resolution.

\end{proof}
Now we are ready to prove the main result of this section.

\begin{proof}[Proof of Theorem~\ref{mainHSj}]
By Lemma~\ref{principalbound} we have $B^\cb(u)=B(u)^{\leq\cb}$. Therefore, Corollary~\ref{hsjrestriction} implies that  $\HS_j(B^\cb(u))=\HS_j(B(u))^{\leq \cb}$. By a result of Bayati et al \cite{BJT} $\HS_j(B(u))$ has linear quotients. The desired result follows now from  Proposition~\ref{linearquotients}.
\end{proof}

\section{Homological shifts of Veronese type ideals}

Given positive integers  $n$ and $d$ and   an  integer vector $\cb=(c_1,\ldots,c_n)$ with $c_i\geq  0$. The
monomial ideal  $I_{\cb,n,d}\subset S=K[x_1,\ldots,x_n]$ with
\[
\mathcal{G}(I_{\cb,n,d})=\{x_1^{a_1}x_2^{a_2}\cdots x_n^{a_n}\; \mid \; \sum_{i=1}^na_i=d \text{ and  $a_i\leq c_i$ for $i=1,\ldots,n$}\}
\]
is  called a {\em Veronese type ideal}. Observe that $I_{\cb,n,d}=0$ if $\sum_{i=1}^nc_i<d$. We therefore assume for the rest of this section that $\sum_{i=1}^nc_i\geq d$, unless otherwise stated.

\begin{Proposition}
\label{veroneseborel}
Let $\cb$ be an integer vector with non-negative components and $d$ and $n$ be positive integers with $I_{\cb,n,d}\neq 0$.  Then $I_{\cb,n,d}=B^\cb(u)$ for some monomial $u$.
\end{Proposition}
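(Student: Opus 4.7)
The plan is to exhibit a specific monomial $u\in \mathcal{G}(I_{\cb,n,d})$ and then verify $B^\cb(u)=I_{\cb,n,d}$. Since strongly stable moves can only decrease indices, the candidate must use the largest indices available subject to the $\cb$-bound. The natural choice comes from a greedy top-filling procedure: let $k$ be the unique index with $\sum_{i>k}c_i<d\leq \sum_{i\geq k}c_i$, and set
\[
u=x_k^{\,d-\sum_{i>k}c_i}\prod_{i=k+1}^n x_i^{c_i}.
\]
The exponent of $x_k$ in $u$ is at most $c_k$ (since $\sum_{i\geq k}c_i\geq d$) and is strictly positive (since $\sum_{i>k}c_i<d$), so $u$ is $\cb$-bounded of degree $d$ and lies in $I_{\cb,n,d}$.

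The inclusion $B^\cb(u)\subseteq I_{\cb,n,d}$ is immediate: strongly stable moves preserve total degree, and the $\cb$-bound is part of the very definition of $B^\cb$. For the reverse inclusion, I will invoke Lemma~\ref{principalbound} to reduce the problem to showing that every $v\in \mathcal{G}(I_{\cb,n,d})$ lies in the ordinary strongly stable closure $B(u)$. Writing $u=x_{i_1}\cdots x_{i_d}$ and $v=x_{j_1}\cdots x_{j_d}$ with both index sequences sorted non-decreasingly, the standard criterion for the strongly stable closure says that $v\in B(u)$ if and only if $j_s\leq i_s$ for every $s$, which is equivalent to the inequality $\sum_{i>t}a_i(v)\leq \sum_{i>t}a_i(u)$ for each threshold $t$, where $a_i(w)$ denotes the exponent of $x_i$ in $w$.

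This numerical comparison is easy to verify from the definition of $u$. For $t\geq k$ the right-hand side equals $\sum_{i>t}c_i$, and the left-hand side is bounded above by the same quantity because $v$ is $\cb$-bounded; for $t<k$ all exponents of $u$ are concentrated in indices $\geq k$, so the right-hand side equals $d$ and the inequality is trivial. Hence $v\in B(u)$, and then $v\in B(u)^{\leq\cb}=B^\cb(u)$ by Lemma~\ref{principalbound}. The only mildly delicate point is the reduction via Lemma~\ref{principalbound}: without it one would have to exhibit an explicit chain of $\cb$-bounded strongly stable moves from $u$ to each $v$, which is notationally painful, whereas passing to the unbounded closure and restricting afterwards makes the argument essentially mechanical.
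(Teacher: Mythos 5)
Your proof is correct and follows the same high-level plan as the paper's: choose a greedy, top-heavy $\cb$-bounded monomial $u$, invoke Lemma~\ref{principalbound} to reduce the claim to showing $v\in B(u)$ for each $v\in\mathcal{G}(I_{\cb,n,d})$, and then conclude $v\in B(u)^{\leq\cb}=B^\cb(u)$. Where you diverge is in how that containment $v\in B(u)$ is verified. The paper writes down, for each $v$, an explicit expression of $v$ as a product of the form $(\text{prefix})\cdot(u/\text{suffix})$ demonstrating membership in $B(u)$ directly, and the argument requires splitting into cases ($c_n>d$ versus $c_n\leq d$, and $m=n$ versus $m<n$). You instead appeal to the dominance criterion for the Borel order: with both monomials written as sorted index sequences, $v\in B(u)$ if and only if $j_s\leq i_s$ for all $s$, equivalently $\sum_{i>t}a_i(v)\leq\sum_{i>t}a_i(u)$ for every threshold $t$. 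Once this is granted, the verification becomes a two-line case analysis on $t\gtrless k$ that is entirely uniform; no separate treatment of $c_n>d$ or of the principal case is needed. The trade-off is that the dominance criterion, while standard, is neither stated nor proved in this paper, so you should either cite a source for it (e.g., the characterization of principal Borel ideals in terms of the Borel partial order) or give the short induction establishing it; the paper's more hands-on route avoids that dependency. One minor stylistic gain on your side: by defining $k$ via $\sum_{i>k}c_i<d\leq\sum_{i\geq k}c_i$ you obtain a single well-defined index covering all cases at once, which tidies up the paper's bifurcated definition of $m$.
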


\begin{proof}
By Lemma \ref{principalbound}, it is enough to find a monomial $u$ such that $I_{\cb,n,d}=B(u)^{\leq\cb}$. Let $\cb=(c_1,\ldots,c_n)$. First assume that $c_n>d$. We show that $I_{\cb,n,d}=B(u)^{\leq\cb}$, where $u=x_n^d$. Clearly  $B(u)^{\leq\cb}\subseteq I_{\cb,n,d}$. Now, let $v=x_1^{a_1}\cdots x_n^{a_n}\in I_{\cb,n,d}$. If $v=u$, then $v\in B(u)^{\leq\cb}$. Let $v\neq u$. Since $\sum_{i=1}^ n a_i=d$, we have $a_n\leq d$ and then $v\in B(u)^{\leq\cb}$. So $I_{\cb,n,d}=B(u)^{\leq\cb}$.

Now, let $c_n\leq d$. We set $m=\max\{j\:\; \sum\limits_{i=n-j+1}^nc_i\leq d\}$ and $r=d-\sum\limits_{i=n-m+1}^nc_i$. Note that since $c_n\leq d$, $m$ is well-defined and $m\geq 1$. If $m=n$, then $\sum\limits_{i=1}^nc_i=d$, because $I_{\cb,n,d}\neq 0$. Thus $I_{\cb,n,d}=(u)=B(u)^{\leq\cb}$, where $u=x_1^{c_1}\cdots x_n^{c_n}$ and we are done. So we may assume that $m<n$. Set $u=x_{n-m}^r\prod\limits_{i=n-m+1}^nx_i^{c_i}$, where $r=d-\sum\limits_{i=n-m+1}^n c_i$.
Clearly  $B(u)^{\leq\cb}\subseteq I_{\cb,n,d}$. Now, let $v=x_1^{a_1}\cdots x_n^{a_n}\in I_{\cb,n,d}$. Then
\[
v=\begin{cases}
      x_{n-m}^{a_{n-m}-r} \prod\limits_{i=1}^ {n-m-1} x_i^{a_i} (u/\prod\limits_{i=n-m+1}^ n x_i^{c_i-a_i}), & \text{if $r<a_{n-m}$}, \\
     \prod\limits_{i=1}^ {n-m-1} x_i^{a_i} (u/(x_{n-m}^{r-a_{n-m}}.\prod\limits_{i=n-m+1}^ n x_i^{c_i-a_i})), & \text{if $r\geq a_{n-m}$}.
   \end{cases}
 \]
Note that $(a_{n-m}-r)+\sum\limits_{i=1}^{n-m-1} a_i=d-\sum\limits_{i=n-m+1}^{n} a_i-r= \sum\limits_{i=n-m+1}^n(c_i-a_i)$, and $(r-a_{n-m})+\sum\limits_{i=n-m+1}^ n  (c_i-a_i)=d-\sum\limits_{i=n-m}^{n} a_i=\sum\limits_{i=1}^{n-m-1} a_i$. So $v\in B(u)$. Since $v$ is
 $\cb$-bounded, $v\in B(u)^{\leq\cb}$.
\end{proof}

Proposition~\ref{veroneseborel} together with Theorem~\ref{mainHSj} imply

\begin{Corollary}\label{}
The homological shift ideals $\HS_j(I_{\cb,n,d})$ have linear quotients.
\end{Corollary}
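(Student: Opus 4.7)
The plan is straightforward: this corollary is a direct consequence of the two main results that immediately precede it. First I would invoke Proposition~\ref{veroneseborel} to write $I_{\cb,n,d}=B^\cb(u)$ for a suitable $\cb$-bounded monomial $u$; the explicit form of $u$ (given by case analysis on whether $c_n>d$ or $c_n\leq d$) is not needed for the corollary, only the fact that such a $u$ exists, so the Veronese type ideal $I_{\cb,n,d}$ is in particular a $\cb$-bounded strongly stable principal ideal.

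Next I would apply Theorem~\ref{mainHSj} with $I=B^\cb(u)$ to conclude that $\HS_j(I_{\cb,n,d})=\HS_j(B^\cb(u))$ has linear quotients for every $j\geq 0$, which is precisely the claim. There is no genuine obstacle, since all the work has been done in the preparatory results.

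For completeness it is worth noting the chain of lemmas that Theorem~\ref{mainHSj} itself uses and that therefore implicitly power the corollary: Lemma~\ref{principalbound} rewrites $B^\cb(u)$ as $B(u)^{\leq \cb}$; Corollary~\ref{hsjrestriction} then gives $\HS_j(B^\cb(u))=\HS_j(B(u))^{\leq \cb}$; the theorem of Bayati–Jahangiri–Talei \cite{BJT} ensures that $\HS_j(B(u))$ has linear quotients (the unbounded principal strongly stable case); and finally Proposition~\ref{linearquotients} transfers the linear quotients property from $\HS_j(B(u))$ to its $\cb$-bounded restriction. Thus the only actual content in the proof of the corollary is the combination ``Proposition~\ref{veroneseborel} + Theorem~\ref{mainHSj},'' which can be stated in a single sentence.
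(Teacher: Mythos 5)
Your proof is correct and matches the paper's reasoning exactly: the corollary is stated as an immediate consequence of Proposition~\ref{veroneseborel} (which identifies $I_{\cb,n,d}$ as a $\cb$-bounded strongly stable principal ideal) combined with Theorem~\ref{mainHSj}. The extra paragraph unwinding the lemmas behind Theorem~\ref{mainHSj} is accurate but not needed for the corollary itself.
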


\medskip

In what follows  we want to determine the ideals $\HS_j(I_{\cb,n,d})$ explicitly.For this purpose we introduce the following notation. Let $I\subset S$ be an equigenerated monomial ideal,  and let $\ell<n$ be a positive integer. Then we let $I_{>\ell}$ be the ideal  with
\[
\mathcal{G}(I_{>\ell})=\{u\in \mathcal{G}(I)\:\; |\supp(u)|>\ell\}.
\]

Recall that a monomial ideal $I$ is called  polymatroidal if for any two monomials $u=x_1^{a_1}\cdots x_n^{a_n}$ and $u=x_1^{b_1}\cdots x_n^{b_n}$ belonging to $\mathcal{G}(I)$, and for each $i$ with $a_i>b_i$, one has $j$ with $a_j<b_j$ such that $x_ju/x_i\in \mathcal{G}(I)$.
\begin{Theorem}
\label{supp}
For all $\ell\geq 0$ we have:
\begin{enumerate}
\item[(a)] $\HS_\ell(I_{\cb,n,d})=(I_{\cb,n,d+\ell})_{>\ell}$.

\item[(b)] $\HS_\ell(I_{\cb,n,d})$ is a polymatroidal ideal.
\end{enumerate}
\end{Theorem}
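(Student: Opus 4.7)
My plan is to obtain (a) as a consequence of the description already available for homological shift ideals of principal strongly stable ideals, and then to deduce (b) from this explicit formula by a direct verification of the polymatroidal exchange property.

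For (a), by Proposition~\ref{veroneseborel} and Lemma~\ref{principalbound}, we have $I_{\cb,n,d}=B^{\cb}(u_0)=B(u_0)^{\leq\cb}$ for some monomial $u_0$ of degree $d$. Corollary~\ref{hsjrestriction} then gives $\HS_\ell(I_{\cb,n,d})=\HS_\ell(B(u_0))^{\leq\cb}$, and the Eliahou--Kervaire description recorded in Example~\ref{exm1}(d) identifies $\HS_\ell(B(u_0))$ as the ideal generated by the monomials $x_Fu$ with $u\in\mathcal{G}(B(u_0))$, $F\subseteq[m(u)-1]$ and $|F|=\ell$. For the inclusion $\HS_\ell(I_{\cb,n,d})\subseteq(I_{\cb,n,d+\ell})_{>\ell}$, I would observe that every $\cb$-bounded $x_Fu$ of this form has degree $d+\ell$ and support containing $F\cup\{m(u)\}$, hence of cardinality at least $\ell+1$ (since $m(u)\notin F$), so it lies in $(I_{\cb,n,d+\ell})_{>\ell}$. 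Conversely, given $w\in\mathcal{G}((I_{\cb,n,d+\ell})_{>\ell})$ with $\supp(w)=\{i_1<\cdots<i_s\}$ and $s\geq\ell+1$, I would set $F=\{i_1,\ldots,i_\ell\}$ and $u=w/x_F$; then $u$ is a $\cb$-bounded monomial of degree $d$, hence a generator of $I_{\cb,n,d}$ and therefore of $B(u_0)$, and $m(u)=i_s>\max F$. This realizes $w=x_Fu$ as a $\cb$-bounded generator of $\HS_\ell(B(u_0))$, so $w\in\HS_\ell(B(u_0))^{\leq\cb}=\HS_\ell(I_{\cb,n,d})$.

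For (b), I would check the polymatroidal exchange property directly on $(I_{\cb,n,d+\ell})_{>\ell}$, using the formula from (a). Given $w_1,w_2\in\mathcal{G}((I_{\cb,n,d+\ell})_{>\ell})$ with exponent vectors $\ab,\ab'$ and an index $i$ with $a_i>a_i'$, the Veronese type ideal $I_{\cb,n,d+\ell}$ enjoys the strongest possible exchange: \emph{every} $j$ with $a_j<a_j'$ already gives $x_jw_1/x_i\in\mathcal{G}(I_{\cb,n,d+\ell})$. A short case analysis shows that $|\supp(x_jw_1/x_i)|>\ell$ holds automatically unless $a_i=1$, $j\in\supp(w_1)$ and $|\supp(w_1)|=\ell+1$ all occur simultaneously. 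The anticipated main obstacle is to handle this remaining situation, which I would resolve by the following support argument: if \emph{every} $j$ with $a_j<a_j'$ were to lie in $\supp(w_1)$, then $a_j'\leq a_j$ for all $j\notin\supp(w_1)$, so $\supp(w_2)\subseteq\supp(w_1)$; combined with $i\in\supp(w_1)\setminus\supp(w_2)$ (forced by $a_i=1$ and $a_i'=0$), this yields $|\supp(w_2)|\leq|\supp(w_1)|-1=\ell$, contradicting $w_2\in\mathcal{G}((I_{\cb,n,d+\ell})_{>\ell})$. Hence an admissible $j$ outside $\supp(w_1)$ always exists when needed, and the exchange property holds, proving (b).
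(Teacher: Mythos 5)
Your proposal is correct and follows essentially the same route as the paper. For part (a), the chain Proposition~\ref{veroneseborel} $\Rightarrow$ Lemma~\ref{principalbound} $\Rightarrow$ Corollary~\ref{hsjrestriction} $\Rightarrow$ Eliahou--Kervaire, together with the explicit forward/reverse inclusion argument using $F=\{i_1,\ldots,i_\ell\}$ and $u=w/x_F$, is exactly what the paper does. For part (b), the paper isolates a general lemma (Proposition~\ref{strongpoly}: for a polymatroidal ideal $I$ with the strong exchange property, $I_{>\ell}$ is again polymatroidal) and then applies it, while you inline the same case analysis on $|\supp|$ and the same contradiction argument in the critical case $a_i=1$, $|\supp(w_1)|=\ell+1$ — in both versions the crux is that failing to find an admissible $j$ outside $\supp(w_1)$ would force $\supp(w_2)\subseteq\supp(w_1)\setminus\{i\}$, contradicting $|\supp(w_2)|>\ell$. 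The only tiny omission is that you take for granted (rather than cite, as the paper does via \cite[Example~2.6]{HH1}) that Veronese type ideals satisfy the strong exchange property, but that verification is one line.
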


\begin{proof} (a) Case $\ell=0$ is trivial. Assume that $\ell\geq 1$. By Proposition \ref{veroneseborel}, Corollary~\ref{hsjrestriction} and Example~\ref{exm1}(c), we obtain that
\[\HS_\ell(I_{\cb,n,d})=(x_Fu\:\;  u\in \mathcal{G}(I_{\cb,n,d}),\; \max(F)< m(u),|F|=\ell \text{ and $x_Fu$ is $\cb$-bounded}),
 \]
 where $F\subset [n]$, $\max(F) =\max\{i\:\; i\in F\}$ and $m(u)=\max\{i\:\; \text{ $x_i$ divides $u$}\}$.

Let $x_Fu\in \HS_\ell(I_{\cb,n,d})$, then $|\supp(x_Fu)|\geq |F|+1=\ell+1$ because of  $\max(F)< m(u)$.
The degree of $x_Fu$ equals to $d+|F|=d+\ell$, it follows that $x_Fu\in (I_{\cb,n,d+\ell})_{>\ell}$
since $x_Fu$ is $\cb$-bounded.

Conversely, let  $v\in \mathcal{G}((I_{\cb,n,d+\ell})_{>\ell})$, then $v$ is $\cb$-bounded. We may write $v$ as $v=x_{j_1}^{\alpha_{j_1}}\cdots x_{j_t}^{\alpha_{j_t}}$ with
 $1\leq j_1<j_2<\cdots<j_{t}\leq n$ and $t>\ell$ and $\alpha_{j_k}>0$ for $k=1,\ldots,t$.
Set $F=\{j_1,j_2,\ldots,j_\ell\}$ and
$u=x_{j_1}^{\alpha_{j_1-1}}\cdots x_{j_{\ell}}^{\alpha_{j_{\ell}-1}}x_{j_{\ell+1}}^{\alpha_{j_{\ell+1}}}\cdots x_{j_t}^{\alpha_{j_t}}$, then
 $|F|=\ell$, $\max(F)< m(u)$ and  $u$ is $\cb$-bounded of  degree  $d$. This implies that $u\in \mathcal{G}(I_{\cb,n,d})$. Hence  $v=x_Fu\in \HS_\ell(I_{\cb,n,d})$.

(b) The ideal $I_{\cb,n,d}$ is a polymatroidal ideal satisfying the strong exchange property, see \cite[Example 2.6]{HH1}. Therefore the desired result follows from part (a) and the next proposition.
\end{proof}

A polymatroidal ideal $I$ is said to satisfy the {\em strong exchange property} if for all $u=x_1^{a_1}\cdots x_n^{a_n}$ and $v=x_1^{b_1}\cdots x_n^{b_n}$ in $\mathcal{G}(I)$ and all $i$ and $j$ with $a_i >b_i$ and $a_j<b_j$ it follows that $x_j(u/x_i)\in \mathcal{G}(I)$.
We may apply the following result to complete the proof of Theorem \ref{supp}(b).

\begin{Proposition}
\label{strongpoly}
Let $I$ be a polymatroidal ideal  satisfying the strong exchange property. Then $I_{>\ell}$ is polymatroidal.
\end{Proposition}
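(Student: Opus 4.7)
The plan is to verify the exchange axiom for $I_{>\ell}$ directly, with the strong exchange property doing almost all the work. Fix $u,v\in\mathcal{G}(I_{>\ell})$ with exponent vectors $\ab=(a_1,\ldots,a_n)$ and $\bb=(b_1,\ldots,b_n)$, and suppose $a_i>b_i$. Since $\deg u=\deg v$, some index $j$ with $a_j<b_j$ exists. For every such $j$ the strong exchange property gives $w:=x_ju/x_i\in\mathcal{G}(I)$, so the whole task reduces to choosing $j$ in such a way that $|\supp(w)|>\ell$.

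The behaviour of $\supp(w)$ splits into three cases. \textbf{(i)} If $a_i\geq 2$, then $i$ remains in the support, so $\supp(w)\supseteq\supp(u)$ and $|\supp(w)|\geq|\supp(u)|>\ell$. \textbf{(ii)} If $a_i=1$ and $j\notin\supp(u)$, then $\supp(w)=(\supp(u)\setminus\{i\})\cup\{j\}$, which still has the same cardinality as $\supp(u)>\ell$. \textbf{(iii)} If $a_i=1$ and $j\in\supp(u)$, then $|\supp(w)|=|\supp(u)|-1$.

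So the only delicate situation is (iii), and more specifically, the case in which \emph{every} admissible $j$ (i.e.\ every $j$ with $a_j<b_j$) already lies in $\supp(u)$. The key observation is that this forces $\supp(v)\subseteq\supp(u)$: any $k\in\supp(v)\setminus\supp(u)$ would satisfy $a_k=0<b_k$, contradicting the assumption. Since $a_i=1>b_i=0$ means $i\in\supp(u)\setminus\supp(v)$, we then get the strict inclusion $\supp(v)\subseteq\supp(u)\setminus\{i\}$, hence $|\supp(v)|\leq|\supp(u)|-1$. Combined with $|\supp(v)|>\ell$, this yields $|\supp(u)|\geq\ell+2$, so for any admissible $j$ we still have $|\supp(w)|=|\supp(u)|-1\geq\ell+1>\ell$.

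The main (really the only) obstacle is case (iii) with $\supp(v)\subseteq\supp(u)$; the argument above resolves it by using the hypothesis $|\supp(v)|>\ell$ on $v$ (not just on $u$) to produce the extra slack of one in the support. Apart from this, no structural property of $I$ beyond strong exchange is invoked, and the usual exchange axiom for $I_{>\ell}$ follows.
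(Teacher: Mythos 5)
Your proof is correct, and it uses the same essential idea as the paper's: both arguments reduce the problem to the case $a_i = 1$, and both exploit the hypothesis $|\supp(v)| > \ell$ (not just $|\supp(u)| > \ell$) to produce the needed slack, with the strong exchange property providing the freedom to pick any admissible $j$. The only difference is cosmetic: the paper splits on whether $|\supp(u)|$ equals $\ell+1$ or exceeds it, and in the former case directly produces an admissible $j\notin\supp(u)$ from the fact that $i\notin\supp(v)$ and $|\supp(v)|>\ell$; you instead split on whether some admissible $j$ lies outside $\supp(u)$, and in the bad subcase argue via $\supp(v)\subseteq\supp(u)\setminus\{i\}$ that $|\supp(u)|\geq\ell+2$, so the support drop is harmless. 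These are contrapositives of the same observation.
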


\begin{proof}
Let $u,v\in \mathcal{G}(I_{>\ell})$, $u=x_1^{a_1}\cdots x_n^{a_n}$ and $v=x_1^{b_1}\cdots x_n^{b_n}$. Assume that $a_i>b_i$. We have to find $j$ such that $b_j>a_j$ and $x_j(u/x_i)\in I$ and $|\supp(x_j(u/x_i))|>l$. We may assume that $i=1$. Since $I$ is polymatroidal, there exists $j$ such that $b_j>a_j$ and $x_j(u/x_1)\in I$.  If $a_1>1$, then $|\supp(x_j(u/x_1))|\geq |\supp(u/x_1)|=|\supp(u)|>\ell$  and so $x_j(u/x_1)\in I_{>\ell}$. If $a_1=1$ and  $|\supp(u)|>{\ell+1}$, then  $|\supp(x_j(u/x_1))\geq |\supp(u)|-1>\ell$ and we are done. Finally if $a_1=1$ and $|\supp(u)|=\ell+1$, we may assume that $u=x_1x_2^{a_2}\cdots x_{\ell}^{a_\ell}$.  Then $v=x_2^{b_2}\cdots x_n^{b_n}$. Since $|\supp(v)|>\ell$, there exists $j>\ell$ with $b_j\neq 0$.  Since $I$ satisfies the strong exchange property, it follows that $x_j(u/x_1)\in I$. Moreover $|\supp(x_j(u/x_1))|=|\supp(u)|>\ell$. This completes the proof.
\end{proof}

\begin{Remark}
{\em If the polymatroidal ideal $I$ does not satisfy the strong exchange property, then $I_{>\ell}$ may not be polymatroidal. Indeed, the ideal $$I=(x_1x_2x_3x_4, x_2^2x_4^2, x_2x_3x_4^2, x_1x_2^2x_4)$$ is polymatroidal, but does not satisfy the strong exchange property, while the ideal
$I_{>2}=( x_1x_2x_3x_4,  x_2x_3x_4^2, x_1x_2^2x_4)$ is not polymatroidal. Note however that $I_{>2}$ nevertheless has linear resolution.
}
\end{Remark}

It is well-known that a polymatroidal ideal $I$ satisfies  the strong exchange property if and only if $I$ is essentially of Veronese type, which means that  $I$ is of Veronese type, up to multiplication by a monomial. Also one can easily see that for any monomial $m$, $\HS_j(mI)=m\HS_j(I)$ for all $j$.  Thus Theorem \ref{supp}(b) implies

\begin{Corollary}
Let $I$ be a  polymatroidal ideal satisfying  the strong exchange property. Then $\HS_j(I)$ is a  polymatroidal ideal for all $j$.
\end{Corollary}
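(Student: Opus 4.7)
The plan is to reduce the corollary directly to Theorem~\ref{supp}(b) via the classification of polymatroidal ideals with the strong exchange property. As the paper recalls just before the statement, a polymatroidal ideal $I$ satisfies the strong exchange property if and only if $I$ is essentially of Veronese type, that is, there exists a monomial $m\in S$ and an ideal of Veronese type $I_{\cb,n,d}$ such that $I=m\,I_{\cb,n,d}$. I would begin by invoking this characterization to write $I$ in such a factored form.

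Next, I would apply the identity $\HS_j(mJ)=m\HS_j(J)$ (valid for any monomial ideal $J$ and any monomial $m$, and also recalled just before the statement) with $J=I_{\cb,n,d}$. This yields
\[
\HS_j(I)=\HS_j(m\,I_{\cb,n,d})=m\,\HS_j(I_{\cb,n,d}).
\]
By Theorem~\ref{supp}(b), the ideal $\HS_j(I_{\cb,n,d})$ is polymatroidal for every $j\geq 0$.

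The last step is the observation that multiplying a polymatroidal ideal by a fixed monomial yields a polymatroidal ideal. This follows directly from the exchange definition: if $\mathcal{G}(\HS_j(I_{\cb,n,d}))=\{w_1,\ldots,w_s\}$, then $\mathcal{G}(m\,\HS_j(I_{\cb,n,d}))=\{mw_1,\ldots,mw_s\}$, and for any pair $mw_k=x_1^{a_1}\cdots x_n^{a_n}$ and $mw_\ell=x_1^{b_1}\cdots x_n^{b_n}$ with $a_i>b_i$, the exponent $i$ satisfies $\nu_i(w_k)>\nu_i(w_\ell)$, so the exchange index $j$ provided by the polymatroidal property of $\HS_j(I_{\cb,n,d})$ for the pair $w_k,w_\ell$ works verbatim for $mw_k,mw_\ell$. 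Combining these three steps proves that $\HS_j(I)$ is polymatroidal for all $j$.

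The three ingredients are all either stated or immediate, so there is no real obstacle; the only point requiring care is the final bookkeeping that the monomial rescaling preserves the polymatroidal exchange condition, which is routine.
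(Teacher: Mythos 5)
Your proof is correct and follows the paper's argument exactly: invoke the classification of strong-exchange polymatroidal ideals as essentially Veronese type, apply the identity $\HS_j(mI)=m\HS_j(I)$, and then cite Theorem~\ref{supp}(b). The only added touch is spelling out the routine check that multiplication by a monomial preserves the polymatroidal exchange condition, which the paper leaves implicit.
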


\section{Homological shifts of edge ideals}

In this section we study the homological shift ideals  of edge ideals which have linear resolution. Let $G$ be a graph with the vertex set $V(G)$ and the edge set $E(G)$. Recall that a perfect elimination ordering for $G$, is a total ordering on $x_1>x_2>\cdots>x_n$ on $V(G)$
such that $N_{G_i}(x_i)$ induces a complete subgraph of $G$, where $G_i$ is the induced subgraph of $G$ on the vertex set $\{x_i,x_{i+1},\ldots,x_n\}$ and $N_G(x)$ denotes the set of vertices  which are adjacent to $x$ in $G$. Also we set $N_G[x]=N_G(x)\cup \{x\}$.

\begin{Theorem}\label{chordal}
 Let $I$ be an edge ideal with linear resolution and $G$ be the chordal graph for which $I=I(G^c)$.
Let $x_1>x_2>\cdots>x_n$ be a perfect elimination ordering for $G$. Then
\[
\HS_k(I)=(x_{i_1}x_{i_2}\cdots x_{i_{k+2}}\:\; 1\leq i_1<i_2<\cdots<i_{k+2}\leq n,  \text{ and there exists}
\]
\[ \hskip 40mm \text{ $t<k+2$ such that $\{x_{i_t},x_{i_{\ell}}\}\notin E(G)$ for all $t<\ell\leq k+2$}).
\]
\end{Theorem}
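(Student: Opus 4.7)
The plan is to apply the characterization in \eqref{eq1} to an explicit linear-quotients ordering of the generators of $I=I(G^c)$. Each generator has the form $x_sx_t$ with $s<t$ and $\{x_s,x_t\}\notin E(G)$; I would order the generators lexicographically, declaring $x_sx_t \prec x_{s'}x_{t'}$ iff $s<s'$, or $s=s'$ and $t<t'$. The key input throughout is the perfect elimination property: for every index $a$, the neighbors of $x_a$ in the induced subgraph $G_a$ form a clique in $G$.

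To see that this ordering gives linear quotients for $I$, and at the same time to identify $\set(u_j)$ for $u_j=x_sx_t$, I would analyze $u_k\!:\!u_j=u_k/\gcd(u_k,u_j)$ for each preceding $u_k=x_ax_b$ with $a<b$. When $u_k$ and $u_j$ share a variable, the quotient is already a variable, so trivially in $\set(u_j)$. The crucial case is $\{a,b\}\cap\{s,t\}=\emptyset$, which forces $a<s$ by lex; here both $\{x_a,x_s\}\in E(G)$ and $\{x_a,x_t\}\in E(G)$ would put $x_s,x_t$ in $N_{G_a}(x_a)$ and hence force $\{x_s,x_t\}\in E(G)$, contradicting that $u_j$ is a non-edge of $G$. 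Thus one of $x_ax_s, x_ax_t$ is a generator of $I$ preceding $u_j$, so $x_a\in\set(u_j)$ and $x_a\mid u_k$. Exactly the same bookkeeping produces the explicit description
\[
\set(u_j)\;=\;\{\,i:i<s\,\}\,\cup\,\{\,i:s<i<t,\ \{x_s,x_i\}\notin E(G)\,\},
\]
since indices $i>t$ satisfy $x_ix_s,x_ix_t\succ u_j$ in lex and therefore contribute nothing.

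To finish, I would feed this into \eqref{eq1} and match both sides of the theorem. Given $u_j=x_sx_t$ and $F\subseteq\set(u_j)$ with $|F|=k$, the monomial $x_Fu_j$ is squarefree of degree $k+2$ because $\set(u_j)\cap\{s,t\}=\emptyset$; writing $\{s,t\}\cup F=\{j_1<\cdots<j_{k+2}\}$, the fact that $\set(u_j)$ contains no index exceeding $t$ forces $t=j_{k+2}$, and I may set $s=j_p$ for some $p\le k+1$. The description of $\set(u_j)$ together with $u_j$ being a non-edge then translates $F\subseteq\set(u_j)$ into the single condition $\{x_{j_p},x_{j_\ell}\}\notin E(G)$ for all $p<\ell\le k+2$, which is exactly the condition in the theorem (with the theorem's ``$t$'' being $p$). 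Conversely, given $j_1<\cdots<j_{k+2}$ and $p<k+2$ with $\{x_{j_p},x_{j_\ell}\}\notin E(G)$ for all $\ell>p$, I would set $u:=x_{j_p}x_{j_{k+2}}$ and $F:=\{j_r:r\neq p,k+2\}$; the indices in $F$ smaller than $j_p$ are automatically in $\set(u)$, and the indices in $F$ strictly between $j_p$ and $j_{k+2}$ lie in $\set(u)$ by hypothesis, whence $x_{j_1}\cdots x_{j_{k+2}}=x_Fu\in \HS_k(I)$. The principal obstacle in this program is the chordal step in the linear-quotients verification; every other part is formal once the ordering and $\set(u_j)$ are in hand.
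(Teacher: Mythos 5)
Your proposal is correct and follows essentially the same route as the paper: prove that $I$ has linear quotients in the lex order induced by the perfect elimination ordering, identify $\set(x_sx_t)=\{i<s\}\cup\{i: s<i<t,\ \{x_s,x_i\}\notin E(G)\}$, and then translate through equation~(\ref{eq1}). The only real difference is that you verify linear quotients and the $\set$ formula directly from the perfect-elimination (clique-neighborhood) property of each simplicial vertex, whereas the paper derives both facts by an induction on $|V(G)|$ via the splitting $I=J+x_1(x_{l_1},\ldots,x_{l_s})$ with $J=I((G\setminus x_1)^c)$; both verifications are sound.
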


\begin{proof}
Let $x_1>x_2>\cdots>x_n$ be a perfect elimination ordering for $G$. Then $I$  has linear quotients with the lex order on $\mathcal{G}(I)$ induced by $x_1>x_2>\cdots>x_n$. Indeed one can see that
$I=J+x_1 (x_{l_1},\ldots,x_{l_s})$, where $J=I((G\setminus x_1)^c)$ and $\{x_{l_1},\ldots,x_{l_s}\}=V(G)\setminus N_G[x_1]$ with $l_1<\cdots < l_s$. By induction hypothesis let $f_1>\cdots>f_r$ be the order of linear quotients on the minimal generators of $J$. Then $x_1x_{l_1}>\cdots> x_1x_{l_s}>f_1>\cdots>f_r$ is an order of linear quotients of $I$. Note that  $N_G[x_1]$ induces a complete subgraph of $G$. Thus for any $1\leq j\leq r$, $\supp(f_j)\cap \{x_{l_1},\ldots,x_{l_s}\}\neq \emptyset$, since $\supp(f_j)$ is an independent set of $G\setminus x_1$. So for any $f_j$, there exists $x_{l_p}$ which divides $f_j$ and then $x_1x_{l_p}:f_j=x_1$ and $x_1|(x_1x_{l_i}:f_j)$ for any $1\leq i\leq s$. This means that $\set_I(f_j)=\set_J(f_j)\cup\{1\}$. Also it is clear that $\set_I(x_1x_{l_i})=\{l_1,\ldots,l_{i-1}\}$ for any $i$.
By induction on $n$, we show that for any $x_ix_j\in I$, where $i<j$, $\set(x_ix_j)=\{1,2,\ldots,i-1,j_1,\ldots,j_s\}$, where
$\{j_1,\ldots,j_s\}=\{k: x_k\notin N_G[x_i], i<k<j\}$. For $i=1$, the equality is already shown. Now, assume that $i\geq 2$. Then $x_ix_j\in J$.
Since $x_2>\cdots>x_n$ is a perfect elimination ordering for $G\setminus x_1$, by induction hypothesis we may assume that $\set_J(x_ix_j)=\{2,3,\ldots,i-1,j_1,\ldots,j_s\}$, where
$\{j_1,\ldots,j_s\}=\{k: x_k\notin N_{G\setminus x_1}[x_i], i<k<j\}$. Note that for any $k>i$, $x_k\notin N_{G\setminus x_1}[x_i]$ is equivalent to $x_k\notin N_{G}[x_i]$.
Thus $\set_I(x_ix_j)=\set_J(x_ix_j)\cup\{1\}=\{1,2,\ldots,i-1,j_1,\ldots,j_s\}$, where $\{j_1,\ldots,j_s\}=\{k: x_k\notin N_G[x_i], i<k<j\}$, as desired.

By (\ref{eq1}), any minimal generator of $\HS_k(I)$ is of the form $u=x_ix_jx_F$, where $x_ix_j\in I$ with $i<j$ and $F\subseteq \set_I(x_ix_j)= \{1,2,\ldots,i-1,j_1,\ldots,j_s\}$ such that $|F|=k$. We may write $u=x_{i_1}x_{i_2}\cdots x_{i_{k+2}}$, where $1\leq i_1<i_2<\cdots<i_{k+2}\leq n$.
Since  $i<j$, we have $i=i_t$ for some $t<k+2$. For any $\ell$ with $t<\ell\leq k+2$, we have  $i_{\ell}>i_t=i$. Hence $x_{i_{\ell}}|x_jx_F$, and then  $i_{\ell}\in \{j_1,\ldots,j_s\}\cup\{j\}$. Therefore $x_{i_{\ell}}\notin N_G[x_{i_t}]$, i.e., $\{x_{i_t},x_{i_{\ell}}\}\notin E(G)$. Conversely, consider an element $v=x_{i_1}x_{i_2}\cdots x_{i_{k+2}}$, where $1\leq i_1<i_2<\cdots<i_{k+2}\leq n$ such that there exists $t<k+2$ with $\{x_{i_t},x_{i_{\ell}}\}\notin E(G)$ for all $t<\ell\leq k+2$. Then $v=x_{i_t}x_{i_{t+1}} x_F$, where $F= \{i_1,\ldots,i_{t-1},i_{t+2},\ldots,i_{k+2}\}$. Note that by our assumption, $x_{i_{t+1}},x_{i_{t+2}},\ldots,x_{i_{k+2}}\notin N_G[x_{i_t}]$. Thus $x_{i_t}x_{i_{t+1}}\in I$ and $F\subseteq \set(x_{i_t}x_{i_{t+1}})$ with $|F|=k$. So $v\in \HS_k(I)$.   The proof is complete.
\end{proof}

%\begin{Corollary}\label{set}
%Let $n$ be any positive integer and $I=I(P_n^c)$. Let $u_1>_{\lex} \cdots >_{\lex} u_m$ be an order of linear quotients for $I$.
%Then for any $u_k=x_ix_j\in \mathcal{G}(I)$ with $i<j$, $$\set(x_ix_j)=\{1,\ldots,i-1,i+2,\ldots,j-1\}.$$
%\end{Corollary}

\begin{Proposition}\label{pathlinear}
Let $n$ be any positive integer and $I=I(P_n^c)$. Then $\HS_k(I)$ has linear quotients for any $k\geq 1$.
\end{Proposition}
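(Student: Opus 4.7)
The plan is to leverage Theorem~\ref{chordal}, which applies because the path $P_n$ is chordal and the natural vertex ordering $x_1 > x_2 > \cdots > x_n$ is a perfect elimination ordering. Since two vertices of $P_n$ are adjacent exactly when their labels differ by $1$, the condition in Theorem~\ref{chordal} reduces to requiring that the index set $\{i_1 < \cdots < i_{k+2}\}$ have some gap, i.e., not be a block of $k+2$ consecutive integers. Thus $\HS_k(I)$ is generated by all squarefree monomials of degree $k+2$ in $x_1, \ldots, x_n$ \emph{except} the ``consecutive'' ones $c_a := x_a x_{a+1} \cdots x_{a+k+1}$ for $a = 1, \ldots, n-k-1$.

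I would then assert that the lex order on $\mathcal{G}(\HS_k(I))$ induced by $x_1 > x_2 > \cdots > x_n$ produces linear quotients, and verify this directly. Fix $u_j = x_{i_1} \cdots x_{i_{k+2}}$ with $i_1 < \cdots < i_{k+2}$, and let $v = x_{j_1} \cdots x_{j_{k+2}} \in \mathcal{G}(\HS_k(I))$ be lex-larger than $u_j$. Let $t$ be the smallest position where the sorted sequences of $v$ and $u_j$ differ; then $j_t < i_t$ and $j_t \notin \supp(u_j)$. The key step is to show that the ``swap'' $w := x_{j_t} u_j / x_{i_t}$ is again in $\mathcal{G}(\HS_k(I))$. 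Once this is established, $w$ is lex-larger than $u_j$, and the identity $x_{i_t} w = x_{j_t} u_j$ gives $x_{j_t} \in (u_1, \ldots, u_{j-1}) : u_j$. Moreover $x_{j_t}$ divides the standard colon generator $v / \gcd(v, u_j) = \prod_{\ell \in \supp(v) \setminus \supp(u_j)} x_\ell$, so every generator of the colon lies inside the ideal generated by variables already in the colon. This is precisely the linear quotients property.

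The entire technical content lives in showing that $w$ is not consecutive. Its sorted support is $(i_1, \ldots, i_{t-1}, j_t, i_{t+1}, \ldots, i_{k+2})$. If $t < k+2$, consecutivity would force $i_{t+1} = j_t + 1$, while $j_t < i_t < i_{t+1}$ would then trap $i_t$ in the open interval $(j_t, j_t+1)$, which contains no integer -- contradiction. If $t = k+2$, then $\supp(w) = \supp(v)$, so $w$ being consecutive is equivalent to $v$ being consecutive, which contradicts $v \in \mathcal{G}(\HS_k(I))$. This short integer-gap case analysis is the only delicate point; the rest is routine bookkeeping about lex order and colons of equigenerated squarefree monomial ideals.
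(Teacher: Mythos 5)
Your proposal is correct and follows essentially the same route as the paper: both derive the explicit generating set of $\HS_k(I(P_n^c))$ from Theorem~\ref{chordal} (excluding the ``consecutive'' squarefree monomials), then verify linear quotients for the lex order induced by $x_1>\cdots>x_n$ by constructing the swap $w$ at the first position of disagreement and checking it retains a gap. The only difference is bookkeeping: the paper fixes the lex-smaller generator as $v$ and produces $w$ from $v$, while you fix the current generator $u_j$ and swap into it from the lex-larger $v$; the integer-gap argument ($j_{d+1}-i_d\geq 2$ versus $i_{t+1}-j_t\geq 2$) is identical.
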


\begin{proof}
Let $P_n:x_1,x_2,\ldots,x_n$ be a path graph and $I=I(P_n^c)$. We have $x_ix_j\in I$, if and only if $j-i\geq 2$. So by Theorem \ref{chordal},
\[
\HS_k(I)=(x_{i_1}x_{i_2}\cdots x_{i_{k+2}}\:\; 1\leq i_1<i_2<\cdots<i_{k+2}\leq n,  \text{ and there exists}
\]
\[ \hskip 90mm \text{ $t<k+2$ such that $i_{t+1}-i_t\geq 2$}).
\]
We show that $\HS_k(I)$ has linear quotients with the lex order induced by $x_1>\cdots>x_n$.
Let $u=x_{i_1}x_{i_2}\cdots x_{i_{k+2}}$ and $v=x_{j_1}x_{j_2}\cdots x_{j_{k+2}}$ be two minimal generators of $\HS_k(I)$ such that
$1\leq i_1<i_2<\cdots<i_{k+2}\leq n$ and $1\leq j_1<j_2<\cdots<j_{k+2}\leq n$ and $u>_{\lex} v$. So there exists $1 \leq d\leq k+2$ such that
$i_r=j_r$ for $r<d$ and $i_d<j_d$. If $d=k+2$, then $u:v=x_{i_d}$ and we are done. So let $d<k+2$. Set $w=(v/x_{j_d}) x_{i_d}$. Then $w=x_{i_1}x_{i_2}\cdots x_{i_d}x_{j_{d+1}}\cdots x_{j_{k+2}}$.
 Since $i_d<j_d<j_{d+1}$, we have $j_{d+1}-i_d\geq 2$. So $w$ is a minimal generator of $\HS_k(I)$. Clearly $x_{i_d}|(u:v)$,  $w>_{\lex} v$ and $w:v=x_{i_d}$.
\end{proof}

A graph $G$ on the vertex set $\{x_1,\ldots,x_n\}$ is called a {\em proper interval graph}, if for all $i<j$, $\{x_i, x_j\}\in E(G)$ implies that the induced subgraph of $G$ on $\{x_i, x_{i+1}, \dots ,x_j\}$ is a complete graph. Path graph is a simple example of a proper interval graph. Any proper interval graph $G$ can be described as  $G=L_1\cup\cdots \cup L_p$, where each $L_i$ is a maximal complete subgraph of $G$ and the labeling of $V(G)$ is such that for any $i\neq j$, if
$x_i\in L_r$ and $x_j\in L_s$ and $r<s$, then $i<j$.
In the next lemma we determine when the homological shift ideal of $I(G^c)$ is non-zero, when $G$ is a proper interval graph.

\begin{Lemma}\label{interval}
Let $G=L_1\cup\cdots \cup L_p$ be a proper interval graph, $s=\min\{|L_i\cap L_{i+1}|:\  1\leq i\leq p-1\}$ and $I=I(G^c)$. Then $\HS_k(I)\neq (0)$ if and only if $k\leq n-s-2$. In particular, $\projdim(I)=n-s-2$.
\end{Lemma}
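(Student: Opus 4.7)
My plan is to combine the explicit description of $\HS_k(I)$ from Theorem~\ref{chordal} with the structural features of the canonical labeling of a proper interval graph. Since proper interval graphs are chordal, the order $x_1>x_2>\cdots>x_n$ is a perfect elimination ordering, so by the proof of Theorem~\ref{chordal} the ideal $I$ has linear quotients. Consequently $\projdim(I)$ is the largest $k$ with $\HS_k(I)\neq 0$, and the ``in particular'' clause follows once the first statement is established.

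First I would translate the non-vanishing of $\HS_k(I)$ into a combinatorial maximum. By Theorem~\ref{chordal}, $\HS_k(I)\neq 0$ iff there exist $1\leq i_1<\cdots<i_{k+2}\leq n$ and a position $t<k+2$ with $\{x_{i_t},x_{i_\ell}\}\notin E(G)$ for all $\ell>t$. Setting $j=i_t$ and $N_j=|\{\ell>j\colon\{x_j,x_\ell\}\notin E(G)\}|$, this is the same as choosing $t-1$ indices less than $j$ together with at least one non-neighbor of $x_j$ of index greater than $j$; hence the largest admissible value of $k+1$ is $\max\{(j-1)+N_j\colon N_j\geq 1\}$. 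Writing $D_j=(n-j)-N_j$ for the number of neighbors of $x_j$ with index larger than $j$, the target identity reduces to
\[
\min\{D_j\colon N_j\geq 1\}=s.
\]

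To handle this minimum I would use the following structural fact, which is an immediate consequence of the labeling axiom ``$x_i\in L_r,\ x_j\in L_s,\ r<s\Rightarrow i<j$'': inside each clique $L_b$, the vertices of $L_b\cap L_{b+1}$ are precisely the last $|L_b\cap L_{b+1}|$ vertices in index order (apply the axiom to any $x_i\in L_b\setminus L_{b+1}$ and any $x_j\in L_b\cap L_{b+1}$). Also, each $x_j$ lies in a consecutive range of cliques $[a_j,b_j]$, and $N_j\geq 1\iff b_j<p$. For the upper bound, take $m(b)$ to be the largest index with $x_{m(b)}\in L_b\setminus L_{b+1}$ for $1\leq b<p$; the neighbors of $x_{m(b)}$ with larger index lie entirely in $L_b\cap L_{b+1}$, since any vertex of larger index outside $L_b$ is in some $L_{b'}$ with $b'>b$ and hence not in any clique containing $x_{m(b)}$. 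Thus $D_{m(b)}=|L_b\cap L_{b+1}|$ and $\min D_j\leq s$. For the lower bound, any $j$ with $N_j\geq 1$ satisfies $x_j\in L_{b_j}\setminus L_{b_j+1}$, and the $|L_{b_j}\cap L_{b_j+1}|$ vertices of $L_{b_j}\cap L_{b_j+1}$ all have index larger than $j$ and share the clique $L_{b_j}$ with $x_j$; hence $D_j\geq|L_{b_j}\cap L_{b_j+1}|\geq s$.

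The main obstacle will be keeping the labeling bookkeeping clean --- in particular verifying that every larger-index neighbor of $x_{m(b)}$ really is confined to $L_b\cap L_{b+1}$, and ruling out vertices in $L_{b'}\setminus L_b$ with $b'<b$ of index greater than $m(b)$ (forbidden by the labeling axiom). Once the two inequalities are in hand they combine to give $\min\{D_j\colon N_j\geq 1\}=s$, so the largest $k$ with $\HS_k(I)\neq 0$ equals $n-s-2$, and the theorem follows.
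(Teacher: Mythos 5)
Your proof is correct and takes essentially the same route as the paper: both reduce the question to the description of $\HS_k(I)$ in Theorem~\ref{chordal} and then exploit the canonical labeling of a proper interval graph, where the consecutive clique intersections control the largest admissible ``gap'' in the support of a generator. Your reformulation through $N_j$ and $D_j$ and the identity $\min\{D_j:N_j\geq 1\}=s$ is a tidy repackaging of the paper's direct construction (for nonvanishing) together with its gap estimate $i_{t+1}-i_t>s$ (for vanishing), not a genuinely different method.
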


\begin{proof}
By Theorem \ref{chordal}, we have
\[
\HS_k(I)=(x_{i_1}x_{i_2}\cdots x_{i_{k+2}}\:\; 1\leq i_1<i_2<\cdots<i_{k+2}\leq n,  \text{ and there exists}
\]
\[ \hskip 40mm \text{ $t<k+2$ such that $\{x_{i_t},x_{i_{\ell}}\}\notin E(G)$ for all $t<\ell\leq k+2$}).
\]
First assume that $k\leq n-s-2$. Let $T=L_d\cap L_{d+1}=\{x_{r+1},\ldots,x_{r+s}\}$ be of size $s$ in $G$. Since $k+s+2\leq n$, we set $u=\prod\limits_{i=1}^{k+s+2} x_i/\prod\limits_{i=r+1}^{r+s} x_i$. Then $\deg(u)=k+s+2-s=k+2$. Also  $\{x_r,x_{r+s+1}\}\notin E(G)$. Otherwise
$\{x_r,\ldots,x_{r+s+1}\}\subseteq L_q$ for some $1\leq q\leq p$. Note that $q\neq d,d+1$, because $x_{r+s+1}\in L_q\setminus L_d$ and $x_r\in L_q\setminus L_{d+1}$. But since $x_r\in L_d$ and $x_{r+s+1}\in L_{d+1}$, we should have $d<q<d+1$, which is a contradiction.
Thus $\{x_{r},x_{r+s+1}\}\notin E(G)$. Since $G$ in an interval graph, $\{x_{r},x_{\ell}\}\notin E(G)$ for any $\ell>r+s+1$. So $u\in \HS_k(I)$ and $\HS_k(I)\neq 0$.

Now, assume that $\HS_k(I)\neq 0$ and
$u=x_{i_1}x_{i_2}\cdots x_{i_{k+2}}\in \HS_k(I)$. Let $t<k+2$ be such that $\{x_{i_t},x_{i_{\ell}}\}\notin E(G)$ for all $t<\ell\leq k+2$.
Since  $\{x_{i_t},x_{i_{t+1}}\}\notin E(G)$, we have $x_{i_t}\in L_m$ and $x_{i_{t+1}}\in L_{m'}$, where $m\neq m'$. So $m<m'$. Let $\lambda$ be the smallest integer with $x_{\lambda}\in L_{m+1}\setminus L_m $. Since $|L_m\cap L_{m+1}|\geq s$, we have $\lambda-i_t>s$.
Since $m'\geq m+1$, one has $i_{t+1}\geq \lambda$. Hence $i_{t+1}-i_t\geq \lambda-i_t>s$. Thus $\supp(u)\subseteq \{1,\dots,n\}\setminus \{i_t+1,i_t+2,\ldots,i_t+s\}$ and so $k+2=|\supp(u)|\leq n-s$.
\end{proof}

\begin{Corollary}\label{pathk}
Let $n$ and $k$ be positive integers and $I=I(P_n^c)$. Then $\HS_k(I)\neq (0)$ if and only if $k\leq n-3$.
\end{Corollary}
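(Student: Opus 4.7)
The plan is to deduce this corollary as a direct application of Lemma \ref{interval}, by simply identifying the maximal clique decomposition of $P_n$ and reading off the intersection parameter $s$.

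First I would observe that the path graph $P_n$ on vertices $x_1, x_2, \ldots, x_n$ is a proper interval graph, with the natural labeling satisfying the ordering requirement preceding Lemma \ref{interval}. Its maximal complete subgraphs are precisely its edges: setting $L_i = \{x_i, x_{i+1}\}$ for $i = 1, \ldots, n-1$, we have $P_n = L_1 \cup L_2 \cup \cdots \cup L_{n-1}$, and the labeling has the property that if $x_i \in L_r$ and $x_j \in L_t$ with $r < t$, then $i < j$. These $L_i$ are indeed the only maximal cliques because $P_n$ is triangle-free.

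Next I would compute the parameter $s = \min\{|L_i \cap L_{i+1}| : 1 \leq i \leq n-2\}$ from Lemma \ref{interval}. For every $i$, $L_i \cap L_{i+1} = \{x_{i+1}\}$, so $|L_i \cap L_{i+1}| = 1$ for all $i$, giving $s = 1$.

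Finally, applying Lemma \ref{interval} to $G = P_n$ with $s = 1$, the conclusion is that $\HS_k(I) \neq (0)$ if and only if $k \leq n - s - 2 = n - 3$, which is exactly the statement of the corollary. No obstacles arise: the proof is a one-line specialization of Lemma \ref{interval}, and the only verification needed is the trivial identification of the maximal cliques of $P_n$.
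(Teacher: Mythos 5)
Your proof is correct and follows exactly the route the paper intends: the paper states Corollary~\ref{pathk} without a separate proof, immediately after Lemma~\ref{interval}, and the introduction explicitly says the corollary is a consequence of that lemma applied to the proper interval graph $P_n$. Your identification of the maximal cliques $L_i=\{x_i,x_{i+1}\}$, the resulting parameter $s=1$, and the substitution $n-s-2=n-3$ is precisely the intended specialization.
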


%\begin{proof}
%By Proposition \ref{chordal}, we have
%\[
%\HS_k(I)=(x_{i_1}x_{i_2}\cdots x_{i_{k+2}}\:\; 1\leq i_1<i_2<\cdots<i_{k+2}\leq n, \ \text{ and there exists}
%\]
%\[
%\hskip 90mm \text{$t<k+2$ such that}\ i_{t+1}-i_t\geq 2 ).
%\]
%If $k\leq n-3$, then clearly there exists a sequence $1\leq i_1<i_2<\cdots<i_{k+2}\leq n$ such that $i_{t+1}-i_t\geq 2$ for some $t$. So
%$x_{i_1}x_{i_2}\cdots x_{i_{k+2}}\in \HS_k(I)$ and then $\HS_k(I)\neq (0)$. Now, by contradiction assume that $\HS_k(I)\neq (0)$ and $k\geq %n-2$. Consider an element $u=x_{i_1}x_{i_2}\cdots x_{i_{k+2}}\in \HS_k(I)$ such that $1\leq i_1<i_2<\cdots<i_{k+2}\leq n$. Since $k+2\leq %i_{k+2}\leq n\leq k+2$, one has $n=i_{k+2}=k+2$. So $u=\prod_{i=1}^{k+2} x_i$. But then $i_{t+1}-i_t=1$ for any $1\leq t \leq k+1$ which %contradicts to $u\in \HS_k(I)$.
%\end{proof}

\begin{Proposition}
\label{projdim}
Let $n$ and $k$ be positive integers and $I=I(P_n^c)$. If $\HS_k(I)\neq (0)$, then $\projdim(\HS_k(I))=n-k-2$.
\end{Proposition}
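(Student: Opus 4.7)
The plan is to exploit the linear quotients structure of $\HS_k(I)$ established in Proposition~\ref{pathlinear}: it has linear quotients with respect to the lex order induced by $x_1>x_2>\cdots>x_n$, and by Theorem~\ref{chordal} all of its generators sit in the single degree $k+2$. A standard formula for monomial ideals with linear quotients \cite{HT} then yields
\[
\projdim(\HS_k(I))=\max_{v\in \mathcal{G}(\HS_k(I))}|\set(v)|,
\]
so the whole problem reduces to computing this maximum. Unwinding the linear quotient structure on squarefree generators of common degree (exactly as in the proof of Proposition~\ref{pathlinear}) shows that for $v=x_{j_1}\cdots x_{j_{k+2}}$ with $j_1<\cdots<j_{k+2}$, an index $i\notin \supp(v)$ belongs to $\set(v)$ if and only if there exists $j\in \supp(v)$ with $i<j$ such that $(\supp(v)\setminus\{j\})\cup\{i\}$ is non-consecutive. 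The upper bound $|\set(v)|\leq n-k-2$ follows at once from $\set(v)\subseteq [n]\setminus \supp(v)$.

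For the matching lower bound I would test the explicit generator $v^{*}=x_{n-k-2}\,x_{n-k}\,x_{n-k+1}\cdots x_n$, whose support $\{n-k-2,\,n-k,\,n-k+1,\ldots,n\}$ carries a single gap at position $n-k-1$ and is hence a legitimate generator of $\HS_k(I)$. Its complement $\{1,2,\ldots,n-k-3,\,n-k-1\}$ has exactly $n-k-2$ elements, and the goal is to place every one of them in $\set(v^{*})$. For each $i\leq n-k-3$ one simply swaps $x_n$ out of $v^{*}$ and inserts $x_i$: the critical gap between $n-k-2$ and $n-k$ survives, so the resulting squarefree monomial is still a generator, and the inequality $i<n$ supplies the required lex comparison, so $i\in \set(v^{*})$.

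The main obstacle, and the only subtle case, is the index $i=n-k-1$ adjacent to the support. Here the naive swap of $x_n$ would yield the consecutive support $\{n-k-2,n-k-1,n-k,\ldots,n-1\}$, which is \emph{not} a generator. The remedy is to swap $x_{n-k}$ out instead: the new support $\{n-k-2,n-k-1,n-k+1,\ldots,n\}$ opens a fresh gap between $n-k-1$ and $n-k+1$ and is therefore non-consecutive, while the inequality $n-k-1<n-k$ provides the lex comparison. Hence $n-k-1\in \set(v^{*})$, so $|\set(v^{*})|=n-k-2$, giving $\projdim(\HS_k(I))=n-k-2$ as claimed.
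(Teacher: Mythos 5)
Your proof is correct, and it works inside the same framework the paper uses—namely $\projdim(\HS_k(I))=\max_{u}|\set(u)|$ from \cite[Lemma~1.5]{HT} together with the linear quotients in lex order from Proposition~\ref{pathlinear}—but your handling of the upper bound is a genuine shortcut. The paper computes $\set(u)$ exactly for every generator $u=x_{i_1}\cdots x_{i_{k+2}}$: it shows $\set(u)=A$ where $A=[1,i_{k+2}]\setminus\supp(u)$ when the first gap in the support occurs at position $t<k+1$, and $\set(u)=A\setminus\{i_{k+1}+1\}$ when $t=k+1$, and then maximizes $|A|=i_{k+2}-k-2$. You skip this entirely by noting that in an equigenerated squarefree ideal $\set(u)$ is disjoint from $\supp(u)$, so $|\set(u)|\le n-(k+2)$ with no case analysis, and then exhibit the single witness $v^{*}=x_{n-k-2}\,x_{n-k}x_{n-k+1}\cdots x_n$ whose full complement lies in $\set(v^{*})$. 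Your verification is sound: for $i\le n-k-3$ the swap $x_i\,v^{*}/x_n$ keeps the gap at $n-k-1$ and $i<n$ gives the lex comparison, while for $i=n-k-1$ the swap $x_{n-k-1}\,v^{*}/x_{n-k}$ opens a new gap between $n-k-1$ and $n-k+1$ and $n-k-1<n-k$ gives the lex comparison. The paper's witness (essentially $i_1=1$, $i_2=3$, $i_{k+2}=n$) serves the same purpose. The trade-off is that the paper's explicit description of $\set(u)$ is more informative, identifying exactly which generators fall short of the bound, whereas your argument is leaner and makes the upper bound immediate.
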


\begin{proof}
By \cite[Lemma 1.5]{HT} and Proposition \ref{pathlinear}, one has $$\projdim(\HS_k(I))=\max\{|\set(u)|:\ u\in \mathcal{G}(\HS_k(I))\}.$$
Note that by Corollary \ref{pathk}, $k\leq n-3$.
%First assume that $k=n-3$. Then
%\[
%HS_k(I)=(x_{i_1}x_{i_2}\cdots x_{i_{n-1}}\:\; \{i_1,i_2,\ldots,i_{n-1}\}=[1,n]\setminus \{j\}, \text{for some}\ 2 \leq j\leq n-1).
%\]
Also $\HS_k(I)$ has linear quotients with the lex order induced by $x_1>\cdots>x_n$ as was shown in the proof of Proposition \ref{pathlinear}.
Let $u=x_{i_1}x_{i_2}\cdots x_{i_{k+2}}\in \HS_k(I)$ such that $i_1<i_2<\cdots<i_{k+2}$. Let $1\leq t\leq k+1$ be the smallest integer  such that $i_{t+1}-i_t\geq 2$ (see Theorem \ref{chordal}). Set $A=[1,i_{k+2}]\setminus \{i_1,i_2,\ldots,i_{k+2}\}$. We show that
\[
\set(u) = \left\{
\begin{array}{ll}
A,  &   \text{if $t<k+1$,}\\
A\setminus \{i_{k+1}+1\}, & \text{if $t=k+1$.}\\
\end{array}
\right. \]
Clearly $\set(u)\subseteq A$, since all minimal generators of $\HS_k(I)$ are squarefree. Let $t<k+1$. Consider $j\in A$. If $j<i_1$, then for $v= x_j(u/x_{i_1})$, we have $v\in \mathcal{G}(\HS_k(I))$, $v>_{lex} u$ and $v:u=x_j$. Thus $j\in \set(u)$. If $j>i_1$, then there exists $\ell$ such that $i_{\ell}<j<i_{\ell+1}$. Thus for $v=x_j(u/x_{i_{\ell+1}}) $, we have $v>_{lex} u$ and $v:u=x_j$. We show that  $v\in \mathcal{G}(\HS_k(I))$. Indeed if $\ell\leq t$, then $\ell+2\leq t+2\leq k+2$ and so $x_j,x_{i_{\ell+2}}|v$. Since $i_{\ell+2}>i_{\ell+1}>j$, one has $i_{\ell+2}-j\geq 2$ and we are done. If $\ell>t$, then $x_{i_t},x_{i_{t+1}}|v$ and $i_{t+1}-i_t\geq 2$ and so $v\in \mathcal{G}(\HS_k(I))$.
Therefore $j\in set(u)$. Thus $A=\set(u)$.

Now, assume that $t=k+1$. By contradiction assume that $i_{k+1}+1\in \set(u)$. Then there exists $v\in \mathcal{G}(\HS_k(I))$ such that  $v>_{lex} u$ and $v:u=x_{i_{k+1}+1}$. So $v=x_{i_{k+1}+1}(u/x_{i_p}) $, where $i_p>i_{k+1}+1$. Therefore $p=i_{k+2}$ and then $v=x_{i_1}x_{i_2}\cdots x_{i_{k+1}}x_{i_{k+1}+1}$. But then by our assumption on $t$, $i_{s+1}-i_s=1$ for any $s<t=k+1$, so $v\notin \HS_k(I)$, a contradiction. Thus $i_{k+1}+1\notin \set(u)$ and $\set(u)\subseteq A\setminus \{i_{k+1}+1\}$.
Conversely, consider $j\in  A\setminus \{i_{k+1}+1\}$.
If $j<i_1$, then for $v=x_j(u/x_{i_1}) $, we have $v\in \mathcal{G}(\HS_k(I))$, $v>_{lex} u$ and $v:u=x_j$. If $j>i_1$, then there exists $\ell$ such that $i_{\ell}<j<i_{\ell+1}$. Thus for $v=x_j(u/x_{i_{\ell+1}}) $, we have $v>_{lex} u$ and $v:u=x_j$. Note that  $v\in \mathcal{G}(\HS_k(I))$. Indeed if $\ell<t$, then $\ell+2\leq t+1=k+2$. Thus $x_j,x_{i_{\ell+2}}|v$ and  $i_{\ell+2}>i_{\ell+1}>j$ and so $i_{\ell+2}-j\geq 2$. If $\ell>t$, then $x_{i_t},x_{i_{t+1}}|v$ and $i_{t+1}-i_t\geq 2$.
If $\ell=t=k+1$, then $i_{k+1}<j<i_{k+2}$. Since $j\neq i_{k+1}+1$, one has $j-i_{k+1}\geq 2$. Also $x_j,x_{i_{k+1}}|v$. Therefore $v\in \mathcal{G}(\HS_k(I))$ and so $j\in set(u)$.

Thus for a minimal generator $u$ of $\HS_k(I)$, $|\set(u)|$ is maximal when $|\set(u)|=|A|=i_{k+2}-k-2$, where  $i_{k+2}=n$.  This happens when we choose for example $i_1=1$ and $i_{2}=3$ which implies that $t=1<k+1$.
\end{proof}

\begin{Corollary}
Let $n$ and $k$ be  positive integers and $I=I(P_n^c)$ such that $\HS_k(I)\neq (0)$. Then $\projdim(\HS_{k+1}(I))<\projdim(\HS_k(I))$.
\end{Corollary}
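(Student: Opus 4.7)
The plan is to deduce this corollary directly from Proposition~\ref{projdim} together with Corollary~\ref{pathk}, so the work is essentially bookkeeping rather than genuine new content. The hypothesis $\HS_k(I)\neq (0)$ is, by Corollary~\ref{pathk}, equivalent to $k\leq n-3$, and under this hypothesis Proposition~\ref{projdim} gives the exact value $\projdim(\HS_k(I)) = n-k-2\geq 1$.

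I would then split into two cases according to whether $\HS_{k+1}(I)$ vanishes. First, if $k+1\leq n-3$, i.e., $k\leq n-4$, then Corollary~\ref{pathk} guarantees $\HS_{k+1}(I)\neq (0)$, and applying Proposition~\ref{projdim} one more time yields
\[
\projdim(\HS_{k+1}(I)) = n-(k+1)-2 = n-k-3 < n-k-2 = \projdim(\HS_k(I)),
\]
which is the desired strict inequality. Second, in the boundary case $k=n-3$, Corollary~\ref{pathk} forces $\HS_{k+1}(I)=\HS_{n-2}(I)=(0)$, and since the projective dimension of the zero ideal is $-\infty$ (or by convention strictly less than any non-negative integer), the inequality $\projdim(\HS_{k+1}(I)) < \projdim(\HS_k(I)) = 1$ still holds.

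There is no real obstacle here: the Proposition~\ref{projdim} formula $\projdim(\HS_k(I))=n-k-2$ is linear in $k$ with slope $-1$, so the decrement by one when passing from $k$ to $k+1$ is automatic, and the only subtlety is handling the vanishing case by invoking Corollary~\ref{pathk}. Thus the entire proof fits in a couple of lines and needs no further combinatorial analysis of the generators of $\HS_k(I(P_n^c))$.
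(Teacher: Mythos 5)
Your proof is correct and is exactly the intended argument: the paper gives no separate proof for this corollary precisely because it is an immediate consequence of Proposition~\ref{projdim} (which gives $\projdim(\HS_k(I))=n-k-2$) together with Corollary~\ref{pathk} (which handles the vanishing threshold). The only mild subtlety is the boundary case $k=n-3$, and your convention $\projdim(0)=-\infty$ disposes of it cleanly.
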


For a set $X=\{x_1,\ldots,x_n\}$, we abuse the notation and set $K[X]=K[x_1,\ldots,x_n]$.
A monomial ideal $I$ in $R=K[X]$ is called {\em vertex splittable} if it can be obtained by the following recursive procedure.
\begin{itemize}
\item[(i)] If $u$ is a monomial and $I=(u)$, $I=(0)$ or $I=R$, then $I$ is a vertex splittable ideal.
\item[(ii)] If there is a variable $x\in X$ and vertex splittable ideals $I_1$ and $I_2$ of $K[X\setminus \{x\}]$ so that $I=xI_1+I_2$, $I_2\subseteq I_1$ and $\mathcal{G}(I)$ is the disjoint union of $\mathcal{G}(xI_1)$ and $\mathcal{G}(I_2)$, then $I$ is a vertex splittable ideal.
\end{itemize}
With the above notations if $I=xI_1+I_2$ is a vertex splittable ideal, then $xI_1+I_2$ is called a {\em vertex splitting} for $I$ and $x$ is called a {\em splitting vertex} for $I$.

We expect that all homological shift ideals of edge ideals with linear resolution have linear quotients. In the following theorem this is shown for the first homological shift ideal. Indeed we show that the first homological shift ideal is vertex splittable which by \cite[Theorem 2.4]{MK} this implies that it has linear quotients.

\begin{Theorem}
\label{linear quotients}
Let $G$ be a chordal graph and $I=I(G^c)$. Then $\HS_1(I)$ is a vertex splittable ideal and therefore it has linear quotients.
\end{Theorem}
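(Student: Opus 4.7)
The plan is to induct on $n=|V(G)|$, using the first vertex $x_1$ of a perfect elimination ordering of $G$ (guaranteed by Dirac's theorem) as the splitting vertex. Write $A:=N_G(x_1)$ and $W:=\{x_2,\ldots,x_n\}\setminus A$; the simpliciality of $x_1$ forces $A$ to be a clique of $G$, and this is the combinatorial fact driving the whole argument.

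First I would decompose $\HS_1(I)=x_1I_1+I_2$ by separating the minimal generators according to divisibility by $x_1$. Using Theorem~\ref{chordal} with $k=1$, these generators are the squarefree cubics $x_{i_1}x_{i_2}x_{i_3}$ ($i_1<i_2<i_3$) satisfying the condition at $t=1$ or $t=2$; since they are all squarefree of the same degree, the split is clean. A direct bookkeeping identifies $I_2=\HS_1(I((G\setminus x_1)^c))$ (apply Theorem~\ref{chordal} to the chordal graph $G\setminus x_1$ with its inherited perfect elimination ordering $x_2>\cdots>x_n$) and $I_1=I(\widetilde H^c)$, where $\widetilde H$ is the graph on $\{x_2,\ldots,x_n\}$ obtained from $G\setminus x_1$ by deleting all edges whose endpoints both lie in $W$. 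The inclusion $I_2\subseteq I_1$ is immediate, since the $t=1$ generators of $I_2$ contain the quadric $x_{i_1}x_{i_2}\in I_1$ while the $t=2$ generators contain $x_{i_2}x_{i_3}\in I_1$. The induction hypothesis applied to $G\setminus x_1$ then gives that $I_2$ is vertex splittable.

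The main obstacle is therefore to show that $I_1$ is vertex splittable, and the key step is the combinatorial lemma that $\widetilde H$ is still chordal. To prove it I would take an arbitrary cycle $C$ of length $k\ge 4$ in $\widetilde H$ and use that every edge of $\widetilde H$ has at least one endpoint in $A$, so no two vertices of $C$ lying in $W$ can be adjacent on $C$. A short case analysis on $|V(C)\cap A|$ then shows this number is at least two: the cases $|V(C)\cap A|\le 1$ would force two consecutive $W$-vertices on $C$, which is impossible. If $|V(C)\cap A|\ge 3$, some two of these $A$-vertices are non-consecutive on $C$; and the case $|V(C)\cap A|=2$ forces $k=4$ with the two $A$-vertices opposite on $C$. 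In either situation the edge joining the two non-consecutive $A$-vertices lies in $\widetilde H$ because $A$ is a clique of $G$, providing the required chord.

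Once $\widetilde H$ is chordal, I would close the argument by invoking (or proving as a short auxiliary lemma by a parallel induction) the fact that the edge ideal of the complement of any chordal graph is itself vertex splittable: split $I(H^c)$ at a simplicial vertex $y$ of $H$ to write $I(H^c)=y\cdot J+I((H\setminus y)^c)$, where $J$ is the monomial prime ideal generated by the vertices in $V(H)\setminus N_H[y]$; simpliciality of $y$ makes every generator of $I((H\setminus y)^c)$ involve a non-neighbor of $y$, so $I((H\setminus y)^c)\subseteq J$, and induction finishes the step. Applied with $H=\widetilde H$, this makes $I_1$ vertex splittable, so $\HS_1(I)=x_1I_1+I_2$ is vertex splittable, and \cite[Theorem~2.4]{MK} upgrades this to linear quotients.
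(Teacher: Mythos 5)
Your proof is correct and matches the paper's argument: both split $\HS_1(I)$ at the simplicial vertex $x_1$ as $x_1\cdot I(\widetilde H^c) + \HS_1(I((G\setminus x_1)^c))$, where $\widetilde H$ is the graph on $V(G)\setminus\{x_1\}$ consisting of the edges of $G\setminus x_1$ that meet $N_G(x_1)$. One point in your favor: you actually prove that $\widetilde H$ is chordal (via the cycle-chord case analysis exploiting that $N_G(x_1)$ is a clique), a fact the paper's proof uses but does not justify; aside from that, the only difference is that you read off the two pieces of the splitting directly from Theorem~\ref{chordal}, whereas the paper derives the same decomposition through the $\set$-calculus of \cite{HT} and \cite{MK}.
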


\begin{proof}
Let $V(G)=\{x,x_1,\ldots,x_n\}$ and $x$ be a simplicial vertex of $G$ with $N_G(x)=\{x_1,\ldots,x_m\}$. We know from \cite[Theorem 2.3]{MK} that $I$ is vertex splittable and $I=J+xK$ is a splitting for $I$, where $J=I((G\setminus x)^c)$ and $K=(x_{m+1},\ldots,x_n)$. Also by \cite[Theorem 2.8]{MK}, if $\mathcal{G}(J)=(g_1,\ldots,g_s)$, then $xx_{m+1},\ldots,xx_n,g_1,\ldots,g_s$ is an order of linear quotients for $I$.
Thus for any $m+1\leq j\leq n$, $\set(xx_j)=\{x_{m+1},x_{m+2},\ldots,x_{j-1}\}$. So by \cite[Lemma 1.5]{HT},
$\HS_1(I)=(ux_F:\ F\subseteq \set(u), |F|=1)=(xx_jx_k:\ m+1\leq j\leq n,\  m+1\leq k\leq j-1)+(g_ix_{\ell}:\ x_{\ell}\in \set_I(g_i))$. One can see that  $\set_I(g_i)=\set_J(g_i)\cup\{x\}$ for any $i$. Therefore $\HS_1(I)=x(x_kx_j:\ m+1\leq k<j\leq n)+x(g_1,\ldots,g_s)+(g_ix_{\ell}:\ x_{\ell}\in \set_J(g_i))=xL+\HS_1(J)$, where $L=(x_kx_j:\ m+1\leq k<j\leq n)+(g_1,\ldots,g_s)$. We show that $\HS_1(I)=xL+\HS_1(J)$ is indeed an splitting for $\HS_1(I)$.
Note that $\HS_1(J)\subseteq J\subseteq L$. By induction on the number of vertices of the graph, we may assume that $\HS_1(J)$ is vertex splittable. It remains to show that $L$ is vertex splittable. Let $H$ be a graph with the vertex set $V(G\setminus x)$ and the edge set $\{\{x_i,x_j\}\in E(G\setminus x):\ \min\{i,j\}\leq m\}$. We show that $L=I(H^c)$.
We have $I(H^c)=(x_ix_j:\ i\neq j, \{x_i,x_j\}\notin E(G\setminus x))+(x_ix_j:\  m+1\leq i<j\leq n)$. Note that $(x_ix_j:\  i\neq j, \{x_i,x_j\}\notin E(G\setminus x))=(g_1,\ldots,g_s)$. Thus $I(H^c)=L$. Since $H$ is a chordal graph with less than $|V(G)|$ vertices, by induction hypothesis $I(H^c)$ is vertex splittable.
\end{proof}

\medskip

\hspace{-6mm} {\bf Acknowledgments}

 \vspace{3mm}
\hspace{-6mm}  This research is supported by the National Natural Science Foundation of China (No.11271275) and  by foundation of the Priority Academic Program Development of Jiangsu Higher Education Institutions.

\end{document}